\tikzset{
mN/.style = {
    draw=#1, semithick, inner sep=0pt}
             }
\newtheorem{theorem}{Theorem}[section]
\newtheorem{definition}[theorem]{Definition}
\newtheorem{lemma}[theorem]{Lemma}
\newtheorem{remark}[theorem]{Remark}
\newtheorem{corollary}[theorem]{Corollary}
\newtheorem{example}[theorem]{Example}
\newtheorem{conjecture}{Conjecture}
\def\eref#1{(\ref{#1})}
\begin{document}

\title{Diameter of io-decomposable Riordan graphs of the Bell type\thanks{This work was supported by the Postdoctoral Research Program of Sungkyunkwan University (2016).}}
\date{}
\author{Ji-Hwan
Jung\\
{\footnotesize \textit{Applied Algebra and Optimization Research
Center, Sungkyunkwan University, Suwon 16419, Rep. of Korea}}
\\
{\footnotesize jh56k@skku.edu}
}

\maketitle

\begin{abstract}
Recently, in the paper \cite{CJKM1} we suggested the two conjectures
about the diameter of io-decomposable Riordan graphs of the Bell
type. In this paper, we give a counterexample for the first
conjecture. Then we prove that the first conjecture is true for the
graphs of some particular size and propose a new conjecture.
Finally, we show that the second conjecture is true for some special
io-decomposable Riordan graphs.
\end{abstract}

\vskip1pc \noindent\textit{AMS classifications}: 05C75, 05A15

\noindent\textit{Key words}: Riordan graph; io-decomposable Riordan
graph; diameter; Catalan graph.

\section{Introduction}

 Let $\kappa[[z]]$ be the ring of formal power series in the variable $z$ over an
integral domain $\kappa$. A  Riordan matrix \cite{SGWW}
$L=[\ell_{n,k}]_{n,k\ge0}$ is defined by a pair of formal power
series $(g,f)\in \kappa[[z]]\times \kappa[[z]]$ with $f(0)=0$ such
that $[z^n]gf^k=\ell_{n,k}$ for $k\ge 0$ where $[z^n]$ is the
coefficient extraction operator. Usually, the Riordan matrix is
denoted by $L=(g,f)$ and its {\em leading principal submatrix} of
order $n$ is denoted by $(g, f)_n$. Since $f(0)=0$, every Riordan
matrix $(g,f)$ is an infinite lower triangular matrix. Most studies
on the Riordan matrices were related to combinatorics
\cite[etc.]{CKS,KS,MerSpr,Sprugnoli} or algebraic structures
\cite[etc.]{CHK,CJ,CJKS,CLMPS}.

Throughout this paper, we write $a\equiv b$ for $a\equiv b\;({\rm
mod\;2})$.

Recently, we in \cite{CJKM1,CJKM2} introduced a Riordan graph by
using the notion of the Riordan matrix modulo 2 as follows.
\begin{definition}{\rm A simple {\em labelled} graph $G$ with $n$ vertices is a {\em
Riordan graph} of order $n$ if the adjacency matrix ${\cal
A}(G)=[r_{i,j}]_{1\le i,j\le n}$ of $G$ is an $n\times n$ symmetric
$(0,1)$-matrix given~by
\begin{align*}
{\cal A}(G)\equiv(zg,f)_n+(zg, f)_n^T,\; \textrm{i.e.}\;
r_{i,j}=r_{j,i}\equiv\left\{
\begin{array}{ll}
[z^{i-2}]gf^{j-1}
 & \text{if $i>j$} \\
0 & \text{if $i=j$}
\end{array}
\right.
 \end{align*}
for some Riordan matrix $(g,f)$ over $\mathbb{Z}$. We denote such
$G$ by $G_n(g,f)$. In particular, the Riordan graph $G_n(g,f)$ is
called {\em proper} if $[z^0]g=[z^1]f=1$.}
\end{definition}

For example, consider the {\em Catalan graph} $CG_n=G_n(C(z),zC(z))$
where $C(z)$ is the generating function for the Catalan numbers,
i.e.
\begin{align}\label{Catalan-gf}
C(z)={1-\sqrt{1-4z}\over 2z}=\sum_{n\geq 0}\frac{1}{n+1}{2n\choose
n}z^n=1+z+2z^2+5z^3+14z^4+\cdots.
\end{align}
When $n=6$ we have Figure \ref{Catalan graph of order 6}.

  \begin{figure}
  \begin{center}
  \begin{tabular}{cc}
 \scalebox{1} 
 {
 \begin{pspicture}(0,0)(3.02625,0.0939063)
 \psdots[dotsize=0.12](0.08,1.2104688)
 \psdots[dotsize=0.12](1.48,1.2104688)
 \psdots[dotsize=0.12](2.88,1.2104688)
 \psdots[dotsize=0.12](0.38,-0.78953123)
 \psdots[dotsize=0.12](1.48,-1.3895313)
 \psdots[dotsize=0.12](2.58,-0.78953123)
 \usefont{T1}{ptm}{m}{n}
 \rput(0.07859375,1.5204687){2}
 \usefont{T1}{ptm}{m}{n}
 \rput(1.5009375,1.5204687){4}
 \usefont{T1}{ptm}{m}{n}
 \rput(2.8953125,1.5204687){6}
 \usefont{T1}{ptm}{m}{n}
 \rput(0.166875,-0.93953127){1}
 \usefont{T1}{ptm}{m}{n}
 \rput(1.3076563,-1.5395312){3}
 \usefont{T1}{ptm}{m}{n}
 \rput(2.6695313,-0.95953125){5}
 \psline[linewidth=0.04cm](0.08,1.2104688)(0.38,-0.78953123)
 \psline[linewidth=0.04cm](0.08,1.2304688)(1.48,-1.3695313)
 \psline[linewidth=0.04cm](0.08,1.2104688)(2.58,-0.78953123)
 \psline[linewidth=0.04cm](1.48,1.2104688)(1.48,-1.3495313)
 \psline[linewidth=0.04cm](1.48,1.2304688)(2.58,-0.74953127)
 \psline[linewidth=0.04cm](2.86,1.2104688)(1.48,-1.4095312)
 \psline[linewidth=0.04cm](2.88,1.2104688)(2.6,-0.76953125)
 \psline[linewidth=0.04cm](0.38,-0.78953123)(2.58,-0.78953123)
 \psline[linewidth=0.04cm](0.38,-0.78953123)(1.48,-1.3895313)
 \psline[linewidth=0.04cm](1.48,-1.3895313)(2.58,-0.78953123)
 \end{pspicture}
 }& $\mathcal{A}(CG_6)=\left(\begin{array}{cccccc}
        0 & 1 & 1 & 0 & 1 & 0 \\
        1 & 0 & 1 & 0 & 1 & 0 \\
        1 & 1 & 0 & 1 & 1 & 1 \\
        0 & 0 & 1 & 0 & 1 & 0 \\
        1 & 1 & 1 & 1 & 0 & 1 \\
        0 & 0 & 1 & 0 & 1 & 0
      \end{array}\right)$
  \end{tabular}
  \end{center}
\caption{The Catalan graph of order 6 and its adjacency
matrix}\label{Catalan graph of order 6}
\end{figure}
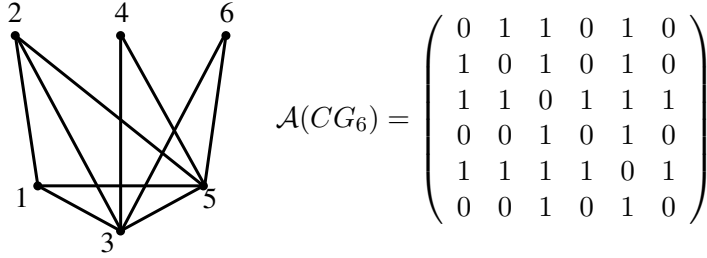

In \cite{CJKM1}, we studied the structural properties of families of
Riordan graphs obtained from infinite Riordan graphs, which include
a fundamental decomposition theorem and certain conditions on
Riordan graphs to have an Eulerian trail/cycle or a Hamiltonian
cycle. A Riordan graph $G_n(g,f)$ is called {\em Bell type} if
$f=zg$. Moreover, we studied the following Riordan graphs of special
Bell type.


\begin{definition}\cite{CJKM1}\label{io/ie-decomposable-def} {\rm Let $G_n=G_{n}(g,f)$ be a proper Riordan graph
 with the odd and even vertex sets $V_{o}=\{i\in V(G_n)\;|\;i\equiv 1\}$ and $V_{e}=\{i\in V(G_n)\;|\;i\equiv 0\}$,
 respectively. The graph $G_n$ is said to be {\em
io-decomposable} if $\left< V_{o}\right> \cong G_{\lceil
n/2\rceil}(g,f)$ and $\left< V_{e}\right>$ is a null
graph.}\end{definition}

A vertex in a graph $G$ is {\em universal} if it is adjacent to all
other vertices in $G$. The {\em distance} between two vertices $u,v$
in a graph $G$ is the number of edges in a shortest path between $u$
and $v$. The {\em diameter} of $G$ is the maximum distance between
all pairs of vertices, and it is denoted by diam$(G)$.

We found several properties of an io-decomposable Riordan graph of
the Bell type as follows.

\begin{lemma}\cite{CJKM1}\label{properties-of-io-decomposable}
Let $G_n=G_{n}(g,zg)$ be an io-decomposable Riordan graph of the
Bell type. Then we have the following.
\begin{itemize}
\item[(i)] If $n=2^{k}+1$ for
$k\geq 0$, then $G_{n}$ and $G_{n+1}$ have at least one universal
vertex, namely $2^k+1$.
\item[(ii)] $G_{n}$ is a
$(\lceil\log_2n\rceil+1)$-partite graph.
\item[(iii)] The chromatic number and the clique number of $G_n$ are $\lceil\log_2n\rceil+1$.
\item[(iv)] The diameter of $G_n$ is bounded by $\mathrm{diam}(G_n)\le \lfloor{\rm log}_2n\rfloor$.
In particular, if $n=2^k+2$ or $2^{k+1}+1$, for $k\ge1$, then
$\mathrm{diam}(G_n)=2$.
\item[(v)] If $2^k+1<n<2^{k+1}$ then $\mathrm{diam}(G_n)\le \lfloor{\rm
log}_2(n-2^k)\rfloor+1.$
\end{itemize}
\end{lemma}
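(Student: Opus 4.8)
The plan is to prove the five items together by strong induction on $n$, exploiting the self-similarity in Definition~\ref{io/ie-decomposable-def}: deleting the (independent) even vertices of $G_n=G_n(g,zg)$ and relabelling $2i-1\mapsto i$ yields $\langle V_o\rangle\cong G_{\lceil n/2\rceil}(g,zg)$, which is again proper, Bell type, and io-decomposable — io-decomposability being a property of the series $g$, it passes to every $G_m(g,zg)$ with $m\le n$ and to $\langle V_o\rangle$. First I would record two elementary facts, both obtained from $r_{i,j}\equiv[z^{i-j-1}]g^j$ and the Frobenius identity $g^{2m}\equiv g(z^2)^m\pmod 2$: (a) consecutive vertices are always adjacent, since $r_{j+1,j}\equiv[z^0]g^j=1$; and (b) for every $r$ with $2\le r\le 2^k$, the subgraph of $G_{2^k+r}$ induced on $\{2^k+1,\dots,2^k+r\}$ is isomorphic to $G_r(g,zg)$ via $2^k+a\mapsto a$ — indeed $r_{2^k+a,\,2^k+b}\equiv[z^{a-b-1}](g^{2^k}g^b)=[z^{a-b-1}]g^b=r^{(r)}_{a,b}$, using $g^{2^k}\equiv g(z^{2^k})\equiv 1\pmod{(2,z^{2^k})}$ together with $a-b-1\le r-2<2^k$. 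So the ``top $r$'' vertices of any such graph carry a faithful copy of the smaller io-decomposable graph $G_r$.

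Items~(i)--(iii) are then short inductions. For~(i), the base case is $G_2\cong K_2$; for $k\ge1$ the vertex $2^k+1$ is odd and corresponds under $\langle V_o\rangle\cong G_{2^{k-1}+1}$ to its universal vertex $2^{k-1}+1$, so $2^k+1$ is joined to all odd vertices, while $r_{2^k+1,\,2j}\equiv[z^{2^{k-1}-j}]g^j=r^{(2^{k-1}+1)}_{2^{k-1}+1,\,j}\equiv1$ gives the edges to all even vertices; a one-line check ($r_{2^k+2,\,2^k+1}\equiv[z^0]g^{2^k+1}=1$, and $V_e$ stays independent) extends universality to $G_{2^k+2}$. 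Item~(ii) is immediate: $V_e$ is one colour class and, inductively, $\langle V_o\rangle$ needs $\lceil\log_2\lceil n/2\rceil\rceil+1=\lceil\log_2 n\rceil$ further ones. For~(iii), (ii) gives $\chi(G_n)\le\lceil\log_2 n\rceil+1$, and since $\omega\le\chi$ it suffices to exhibit a clique of that size; I would prove the slightly stronger statement that $G_n$ has such a clique \emph{containing vertex $1$}, the inductive step lifting a clique $D\ni 1$ of $G_{\lceil n/2\rceil}$ into $V_o$ and adjoining vertex~$2$, which is adjacent to vertex~$1$ always and to the lift of $D\setminus\{1\}$ because the neighbourhood of vertex~$2$ inside $V_o$ is exactly the lift of the neighbourhood of vertex~$1$ in $G_{\lceil n/2\rceil}$ (another consequence of $g^{2m}\equiv g(z^2)^m$).

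The diameter bounds~(iv) and~(v) follow from one decomposition. Write $n=2^k+r$ with $2\le r\le 2^k$ (when $n=2^k+1$, item~(i) gives a universal vertex and $\mathrm{diam}\le2$; when $n$ is a power of $2$, apply the decomposition below with $2^k$ replaced by $2^{k-1}$). Put $u:=2^k+1$, $A:=\{1,\dots,2^k+1\}$, $B:=\{2^k+1,\dots,n\}$, so $A\cup B=V(G_n)$ and $A\cap B=\{u\}$. By~(i), $u$ is universal in $\langle A\rangle=G_{2^k+1}$; by~(b), $\langle B\rangle\cong G_r(g,zg)$, an io-decomposable graph on $r<n$ vertices, so $\mathrm{diam}\langle B\rangle\le\lfloor\log_2 r\rfloor$ by the induction hypothesis for~(iv), with $u$ playing the role of its vertex~$1$. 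Hence: for $x,y\in A$, $d(x,y)\le2$; for $x,y\in B$, $d(x,y)\le\mathrm{diam}\langle B\rangle\le\lfloor\log_2 r\rfloor$; and for $x\in A\setminus B$, $y\in B\setminus A$, $d(x,y)\le d(x,u)+d(u,y)\le1+\lfloor\log_2 r\rfloor$. Since $r\ge2$ this yields $\mathrm{diam}(G_n)\le\lfloor\log_2 r\rfloor+1$, which is exactly~(v) whenever $2^k+1<n<2^{k+1}$ and is $\le\lfloor\log_2 n\rfloor$ in every case, establishing~(iv); the ``$\mathrm{diam}=2$'' statements for $n=2^k+2,\,2^{k+1}+1$ then follow from~(i) plus the fact, via~(iii), that these graphs are not complete.

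The main obstacle is fact~(b): since the whole diameter argument rests on ``the top $r$ vertices span $G_r$'', one has to track precisely which coefficients of $g^{2^k}g^b$ survive reduction modulo $2$, and make sure the bound $a-b-1<2^k$ is always in force — this is exactly why the hypothesis $r\le 2^k$ is needed, not merely $r<2^{k+1}$. A secondary point is arranging the joint induction so that properness, io-decomposability, and the explicit universal vertex of~(i) are all available for every $G_m$ with $m<n$, and so that the shared vertex $u$ in the decomposition is not double-counted. Once~(b) is in hand, the remaining work — the clique construction in~(iii) and the routing in~(iv)--(v) — is light, because the decomposition replaces ``walk across $\langle V_o\rangle$'', which would cost an extra additive constant, by ``one step to the universal vertex $u$, then walk across the smaller graph $\langle B\rangle$''.
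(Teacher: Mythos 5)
The paper does not actually prove this lemma: it is quoted verbatim from \cite{CJKM1}, so there is no internal proof to compare yours against. Judged on its own terms, your argument is essentially correct, and it is assembled from precisely the ingredients this paper itself imports from \cite{CJKM1} and reuses for its new results: your fact (b) is a mild strengthening (arbitrary $r\le 2^k$ rather than $r\in\{2^s,2^s+1\}$) of the fractal property in Lemma~\ref{fractal-Riordan-graph}, justified by the correct Frobenius computation $g^{2^k}\equiv g(z^{2^k})\equiv 1\pmod{(2,z^{2^k})}$ together with the degree bound $a-b-1\le r-2<2^k$, and your $A$/$B$ decomposition through the universal vertex $2^k+1$ is exactly the device used in the proofs of Lemma~\ref{diam-Catalan-1em} and Lemma~\ref{lemma-diameter-inequality}. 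The inductions for (i)--(iii) check out; in particular your identification of $N(2)\cap V_o$ with the lift of $N(1)$ is the right way to grow the clique, and the arithmetic $\lceil\log_2\lceil n/2\rceil\rceil=\lceil\log_2 n\rceil-1$ holds for all $n\ge2$.

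Two small points should be patched. First, an edge case in (iv): for $n=3=2^1+1$ your treatment of the case $n=2^k+1$ yields only $\mathrm{diam}(G_3)\le 2$, while (iv) asserts $\mathrm{diam}(G_3)\le\lfloor\log_2 3\rfloor=1$; this is harmless, since by your own item (iii) the clique number of $G_3$ is $\lceil\log_2 3\rceil+1=3$, so $G_3\cong K_3$, but the case split should say so explicitly. Second, the step ``io-decomposability passes to every $G_m(g,zg)$ with $m\le n$'' is used silently at every level of the induction; it is true, but the clean justification is Lemma~\ref{charac-io-decomp}, which makes io-decomposability a condition on the binary $A$-sequence (hence on $g$) alone, and this also supplies the fact $[z^1]g\equiv a_1\equiv 1$ that your base cases quietly rely on.
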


A graph is called {\em weakly perfect} if its chromatic number
equals its clique number. By (iii) of Lemma
\ref{properties-of-io-decomposable}, every io-decomposable Riordan
graph of the Bell type is weakly perfect. It is known \cite{KPR}
that almost all $K_k$-free graphs are $(k-1)$-partite for $k\ge3$.
By (ii) and (iii) of Lemma \ref{properties-of-io-decomposable},
every io-decomposable Riordan graph $G_n(g,zg)$ of the Bell type is
$K_{\lceil\log_2n\rceil+2}$-free and
($\lceil\log_2n\rceil+1$)-partite for $n\ge2$. Thus the
io-decomposable Riordan graph of the Bell type is very interesting
object in Riordan graph theory.

It is known \cite{CJKM1} that the Pascal graph
$PG_n=G_n(1/(1-z),z/(1-z))$ and the Catalan graph
$CG_n=G_n(C(z),zC(z))$ are the io-decomposable Riordan graphs of the
Bell type. The following two conjectures introduced in \cite{CJKM1}
show significance of the Pascal graph $PG_n$ and the Catalan graph
$CG_n$.

\begin{conjecture}\cite{CJKM1}\label{conj1} {\rm Let $G_n$ be an
io-decomposable Riordan graph of the Bell type. Then
\begin{align*}
2={\rm diam}(PG_n)\le {\rm diam}(G_n) \le {\rm diam}(CG_n)
\end{align*}
for $n\geq 4$. Moreover, $PG_n$ is the only graph in the class of
io-decomposable graphs of the Bell type  whose diameter is $2$ for
{\em all} $n\ge4$.}
\end{conjecture}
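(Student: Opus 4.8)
The plan is to split the statement into its four constituent assertions: (a) $\mathrm{diam}(PG_n)=2$ for $n\ge 4$; (b) $\mathrm{diam}(G_n)\ge 2$; (c) $\mathrm{diam}(G_n)\le\mathrm{diam}(CG_n)$; and (d) that $PG_n$ is the unique io-decomposable Bell-type graph with diameter $2$ for \emph{every} $n\ge 4$. I would dispose of (a) and (b) first. For (a), the Riordan description gives $r_{j,1}\equiv[z^{j-2}]\,g=[z^{j-2}]\frac{1}{1-z}=1$ for every $j\ge 2$, so vertex $1$ is universal in $PG_n$ and $\mathrm{diam}(PG_n)\le 2$; since $r_{4,2}\equiv[z^{2}]\,gf=2\equiv 0$, the graph is not complete when $n\ge 4$, so $\mathrm{diam}(PG_n)=2$. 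For (b), io-decomposability forces $\langle V_e\rangle$ to be null with $|V_e|\ge 2$ for $n\ge 4$, so $G_n$ is not complete; together with the connectivity of io-decomposable Bell-type graphs (needed in any case for Lemma~\ref{properties-of-io-decomposable}(iv) to be meaningful), this gives $\mathrm{diam}(G_n)\ge 2$. I would record this connectivity fact explicitly at the outset, since the whole inequality chain is vacuous without it.

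The substance is (c). The natural route is induction on $n$ through the io-decomposition $\langle V_o\rangle\cong G_{\lceil n/2\rceil}(g,f)$. Writing the adjacency matrix with the odd vertices first, $\mathcal{A}(G_n)\equiv\left(\begin{smallmatrix}\mathcal{A}(\langle V_o\rangle)&B\\ B^{T}&0\end{smallmatrix}\right)$, one estimates distances as follows: two odd vertices are joined inside $\langle V_o\rangle$ by a path of the inductive length; an even vertex is first moved, in one step, to an adjacent odd vertex through the attachment matrix $B$, after which the odd-part bound applies. To force the Catalan graph to be extremal, one would want a \emph{monotonicity} principle: among all admissible $g$, the choice $g=C(z)$ should simultaneously make the recursively defined odd subgraph the ``widest'' and the even-to-odd attachment $B$ the ``sparsest,'' so that $CG_n$ is worst on both counts and the two estimates above are never better than for $CG_n$.

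I expect (c) to be the main obstacle, and a careful examination of it is exactly what decides the conjecture. There is no \emph{a priori} reason a single generating function should optimize both competing quantities: a $g$ yielding a large-diameter odd subgraph need not yield the sparsest even-to-odd attachment, and conversely, so $\mathrm{diam}(G_n)\le\mathrm{diam}(CG_n)$ may fail even though each step of the induction is individually sharp only for $CG_n$. I would therefore compute $\mathrm{diam}(CG_n)$ exactly for $n$ near powers of $2$ (where parts (iv)--(v) of Lemma~\ref{properties-of-io-decomposable} pin it down), and test Bell-type series other than $C(z)$ and $1/(1-z)$ against it: this is where one either closes the argument for a restricted range of $n$---those for which the io-recursion is forced to behave Catalan-like---or, as the abstract signals, produces an explicit counterexample, which then motivates the corrected conjecture.

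For the uniqueness claim (d) I would argue by contradiction. If $G_n\ne PG_n$, let $n_0$ be the least order at which their adjacency matrices differ and fix a first discrepant entry. Pushing this discrepancy back through the io-recursion, it resurfaces inside $\langle V_o\rangle$ at order $2n_0-1$ or $2n_0$, where---using the monotonicity-type control from (c) to ensure it is not ``repaired'' by other edges---it should produce a pair of vertices with neither an edge nor a common neighbour, i.e.\ at distance $\ge 3$, contradicting diameter $2$ at that larger order. The obstacle here is again bookkeeping: tracking how one altered entry at level $n_0$ propagates through the attachment matrix $B$ at the next level without cancellation, which relies on precisely the comparison principle that (c) needs and that does not hold in full generality.
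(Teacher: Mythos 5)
You cannot complete this proof, because the statement is false as written: the paper's main point is to \emph{refute} the upper bound $\mathrm{diam}(G_n)\le\mathrm{diam}(CG_n)$, not to prove it. Your own diagnosis of part (c) is exactly right --- there is no monotonicity principle forcing $C(z)$ to be simultaneously worst for the odd-part recursion and for the even-to-odd attachment --- and the paper realizes this concretely. Taking the io-decomposable Bell-type graph whose binary $A$-sequence has generating function $A(z)=\sum_{i=0}^{15}z^{i}$ (admissible by Lemma~\ref{charac-io-decomp}), a machine check up to $n=100$ yields thirteen orders, beginning with $n=44$, at which $\mathrm{diam}(G_n)=4$ while $\mathrm{diam}(CG_n)=3$. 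So the inductive scheme you propose for (c) cannot close in general; the paper instead salvages the inequality only for special orders $n=2^k$, $n=2^k-1$ and $n=1+2^m+2^k$, by computing $\mathrm{diam}(CG_n)$ exactly there (via the reverse relabelling $CG_{2^k}^r=G_{2^k}(1,z+z^2)$ and Lucas's theorem) and comparing against the general upper bounds of Lemma~\ref{properties-of-io-decomposable}(iv)--(v). To ``prove'' the statement you were given, the only honest outcome is the counterexample; your plan correctly anticipates this but stops short of producing one, so as a proof it has an unfillable gap.

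Two smaller points. Parts (a) and (b) of your breakdown are fine and agree with the paper's remark that vertex $1$ is universal in $PG_n$ and $PG_n\not\cong K_n$ for $n\ge4$ (connectivity of io-decomposable Bell-type graphs follows from the existence of universal-vertex orders and the fractal structure, so $\mathrm{diam}(G_n)\ge2$ is not an issue). Part (d), the uniqueness of $PG_n$, is not proved in the paper either --- it remains open there --- and your discrepancy-propagation argument inherits the same missing comparison principle you flagged in (c), so it cannot be carried out as described.
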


\begin{conjecture}\cite{CJKM1}\label{conj2} {\rm We have that diam$(CG_{2^k})=k$ and there are no io-decomposable Riordan graphs $G_{2^k}\not\cong CG_{2^k}$ of the Bell type satisfying diam$(G_{2^k})=k$ for all $k\ge
1$.} \end{conjecture}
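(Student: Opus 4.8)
The statement comprises two parts: (A) $\mathrm{diam}(CG_{2^k})=k$, and (B) a uniqueness clause singling out $CG_{2^k}$ among io-decomposable Riordan graphs of the Bell type on $2^k$ vertices of diameter $k$. I would approach them by a single induction on $k$. The computational engine is that, modulo $2$, $w:=zC$ satisfies $w\equiv z+w^2$ (this is merely $C\equiv1+zC^2$), whence $w\equiv\sum_{i\ge0}z^{2^i}$ and $w^{2^a}\equiv\sum_{i\ge a}z^{2^i}$; since the adjacency entries of $CG_n$ are $r_{i,j}\equiv[z^{i-2}]C(zC)^{j-1}=[z^{i-1}]w^{j}$ for $i>j$, every adjacency of a Catalan graph is thereby made explicit.

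For (A), the bound $\mathrm{diam}(CG_{2^k})\le k$ is immediate from Lemma~\ref{properties-of-io-decomposable}(iv), so the content is the matching lower bound, for which I would prove the sharper fact $d_{CG_{2^k}}(1,2^k)=k$. Writing $w^{j}=\prod_{b}w^{2^{b}}$ over the positions $b$ of the $1$'s of $j$, the coefficient $[z^{2^k-1}]w^{j}$ is, mod $2$, the number of ways to write $2^k-1$ as a sum of $s$ powers of $2$ with prescribed lower bounds on the exponents, where $s$ is the binary digit sum of $j$; since $2^k-1$ requires at least $k$ powers of $2$, this vanishes for every $j\le2^k-1$ except $j=2^k-1$, where it equals $1$. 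Hence in $CG_{2^k}$ the vertex $2^k$ has the single neighbour $2^k-1$, so $d_{CG_{2^k}}(1,2^k)=1+d_{CG_{2^k}}(1,2^k-1)$, and it remains to show $d_{CG_{2^k}}(1,2^k-1)\ge k-1$. Both endpoints are odd, and by io-decomposability $\langle V_o\rangle\cong CG_{2^{k-1}}$ under $i\mapsto(i+1)/2$, which sends $1\mapsto1$ and $2^k-1\mapsto2^{k-1}$, so the inductive form of (A) gives $d_{\langle V_o\rangle}(1,2^k-1)=k-1$. Thus (A) reduces to the claim that walks through even vertices cannot shorten distances between odd vertices.

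This claim I would isolate as a \emph{contraction lemma}: in $CG_{2^k}$, the neighbourhood $N(2m)$ of each even vertex $2m$ — which lies in $V_o$, since $\langle V_e\rangle$ is a null graph — is, inside $\langle V_o\rangle\cong CG_{2^{k-1}}$, contained in the closed neighbourhood of a single vertex (so in particular of diameter $\le2$ there). Granting it, any odd-to-odd walk in $CG_{2^k}$ can be rerouted inside $\langle V_o\rangle$ without increasing its length — replace each passage $u,\,2m,\,v$ by a detour of length $\le2$ through a centre of $N(2m)$ — so $d_{\langle V_o\rangle}(u,v)\le d_{CG_{2^k}}(u,v)$ for odd $u,v$, which closes (A). I expect this lemma, to be extracted from the $\mathbb F_2$-shape of $C$ by analysing which odd $j$ have $[z^{2m-1}]w^{j}\equiv1$, to be the main obstacle; should it resist a clean proof, a fallback is to analyse the breadth-first layers from vertex $1$ in $CG_{2^k}$ directly.

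For (B), one must be careful at $k=2$: the Pascal graph $PG_4$ is io-decomposable of Bell type with $\mathrm{diam}(PG_4)=2$ and $PG_4\not\cong CG_4$, so the uniqueness clause cannot hold verbatim for all $k\ge1$; the realistic target is uniqueness for $k\ge3$, or, as the paper does, within a prescribed subfamily of io-decomposable Bell graphs. Let $G=G_{2^k}(g,zg)$ be such a graph with $\mathrm{diam}(G)=k$. Since $G$ is connected, every even vertex has an odd neighbour, hence every vertex lies within distance $1$ of $V_o$; the plan is to combine this with the maximality of $\mathrm{diam}(G)$ to force $\mathrm{diam}(\langle V_o\rangle)=k-1$, and then — $\langle V_o\rangle$ being again io-decomposable of Bell type, on $2^{k-1}$ vertices — to invoke the inductive form of (B) and conclude $\langle V_o\rangle\cong CG_{2^{k-1}}$; finally one argues that, with the odd part equal to the Catalan graph, the requirement that $G$ be the Riordan graph $G_{2^k}(g,zg)$ pins the coefficients $g_i\bmod2$ down tightly enough that the edges between $V_o$ and $V_e$ must be exactly those of $CG_{2^k}$, on pain of the diameter dropping below $k$. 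The two hard points are: upgrading the easy bound $\mathrm{diam}(\langle V_o\rangle)\ge k-2$ (coming from the distance-$1$-to-$V_o$ remark) to the exact value $k-1$, where the even–even pairs require a finer version of the contraction argument; and the bipartite rigidity step, which is delicate because there is no simple closed description of the series $g$ whose Bell-type Riordan graphs are io-decomposable — which is presumably exactly why one proves (B) first only within a special family and then tries to widen it.
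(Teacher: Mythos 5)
You are addressing Conjecture~\ref{conj2}, which the paper itself does not prove in full: it establishes the first half, ${\rm diam}(CG_{2^k})=k$, in Theorem~\ref{diam-Catalan}, and the uniqueness half only for restricted families in Section~3. Measured against that, your part (A) is a genuinely different route from the paper's but has a real gap. Your preliminary computations are correct: $w=zC$ satisfies $w\equiv z+w^2$, hence $w\equiv\sum_{i\ge0}z^{2^i}$, $r_{i,j}\equiv[z^{i-1}]w^j$, and your conclusion that $2^k-1$ is the unique neighbour of $2^k$ in $CG_{2^k}$ is right (any representation of $2^k-1$ as a sum of powers of $2$ uses at least $k$ terms). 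But everything then hangs on the unproven ``contraction lemma'': without it, the inductive hypothesis only bounds $d_{\langle V_o\rangle}(1,2^k-1)$, not $d_{CG_{2^k}}(1,2^k-1)$, since a shortest path may pass through even vertices, and the lower bound does not close. You flag this yourself as the main obstacle, and it is exactly the step you have not supplied (I can verify it for $k\le 3$, but that is not a proof). The paper avoids the issue entirely: by Lemma~\ref{reverse-relabelling-A-sequence-io} and Lucas's theorem, $CG_{2^k}^r=G_{2^k}(1,z+z^2)$, in which the largest neighbour of vertex $i$ is $2i$; hence any walk from $1$ can at best double its position at each step and needs $k$ steps to reach $2^k$. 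That one-line monotonicity argument is what your approach is missing; either prove your contraction lemma or switch to the reversal.

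Part (B) is a strategy outline rather than a proof: you explicitly leave open both the step forcing ${\rm diam}(\langle V_o\rangle)=k-1$ and the ``bipartite rigidity'' step, and those are precisely where the difficulty lies. The paper, too, only proves this half for $A$-sequences of the special form \eref{A-sequence-condition} and for $A$-sequences whose first $16$ entries are not all $1$s, the latter by machine checks at orders $8$ and $16$ propagated upward via Lemma~\ref{lemma-diameter-inequality}. Your observation that $PG_4$ is io-decomposable of Bell type, non-isomorphic to $CG_4$, and of diameter $2=k$ is a correct and worthwhile criticism of the conjecture's literal wording at $k=2$ (Table~1 likewise exhibits an order-$8$ Bell-type io-decomposable graph with $A$-sequence $(1,1,1,1,1,1,0)$ and diameter $3$, so the literal clause also fails at $k=3$; the intended statement is evidently for large $k$, as in the paper's theorems for $k\ge4$). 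But identifying that the statement needs reformulation is not proving it: as written, neither half of your proposal is complete.
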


We note that ${\rm diam}(PG_n)=1$ if $n=2,3$ and ${\rm
diam}(PG_n)=2$ if $n\ge4$ since the vertex 1 is adjacent to all
other vertices, $PG_n\cong K_n$ if $n=2,3$ and $PG_n\not\cong K_n$
if $n\ge4$.

In this paper, we first give a counterexample of the upper bound in
Conjecture~\ref{conj1}. Then we prove that the upper bound in
Conjecture~\ref{conj1} is true for the graph of some particular size
and we propose a new conjecture for an upper bound of the diameter
of an io-decomposable Riordan graph of the Bell type. Finally, we
show that Conjecture~\ref{conj2} is true for some special
io-decomposable Riordan graphs.

\section{Upper bound of Conjecture 1}

It is known \cite{MRSV} that an infinite lower triangular matrix
$L=[\ell_{i,j}]_{i,j\ge0}$ with $\ell_{0,0}\ne0$ is a proper Riordan
matrix if and only if there is a unique sequence $(a_0,a_1,\ldots)$
with $a_0\neq0$ such that, for $i\ge j\ge0$,
\begin{align*}
\ell_{i+1,j+1}=a_0\ell_{i,j}+a_1\ell_{i,j+1}+\cdots+a_{i-j}\ell_{i,i}.
\end{align*}
The sequence $(a_i)_{i\ge0}$ is called  the {\it $A$-sequence} of
the Riordan array. Also, if $L=(g,f)$ then
\begin{align}\label{e:eq12}
f=zA(f),\quad{\rm or\ equivalently}\quad A=z/\bar f
\end{align}
where $A=\sum_{i\ge0}a_iz^i$ is the generating function for the
$A$-sequence of $(g,f)$. In particular, if $L$ is a binary Riordan
matrix $L\equiv(g,f)$ with $f'(0)=1$ then the sequence is called the
{\it binary $A$-sequence} $(1,a_1,a_2,\ldots)$ where
$a_k\in\{0,1\}$.

We in \cite{CJKM1} characterized the io-decomposable Riordan graph
$G_n(g,zg)$ of the Bell type, see the following lemma.

\begin{lemma}\cite{CJKM1}\label{charac-io-decomp}
Let $G_n=G_{n}(g,zg)$ be a Riordan graph of the Bell type. Then
$G_n$ is io-decomposable if and only if the binary $A$-sequence of
$(g,zg)$ is $(1,1,a_2,a_2,a_4,a_4,\ldots)$ where $a_{2j}\in\{0,1\}$
for all $j\ge 1$.
\end{lemma}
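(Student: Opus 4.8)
The plan is to unwind ``io-decomposable'' at the level of the adjacency matrix, turn it into congruences on the coefficients $g_k:=[z^k]g$, and then rephrase those congruences through the $A$-sequence using $f=zg$. First observe that the even block contributes nothing: for $i>j$ an edge between the even vertices $2i,2j$ would require $[z^{2i-2j-1}]g^{2j}\not\equiv 0$, but $g^{2j}\equiv g(z^2)^{j}$ (squaring is the Frobenius map mod $2$), which has no odd-degree term, so $\langle V_e\rangle$ is automatically a null graph. Hence, identifying the vertex $2t-1$ of $\langle V_o\rangle$ with the vertex $t$, the graph $G_n$ is io-decomposable if and only if $[z^{2(i-j)-1}]g^{2j-1}\equiv[z^{(i-j)-1}]g^{j}$ for all $1\le j<i\le\lceil n/2\rceil$. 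Writing $g^{2j-1}\equiv g(z)\,g(z^2)^{j-1}$ and extracting an odd-degree coefficient, the left-hand side equals $\sum_{u+t=i-j-1}g_{2u+1}\,[z^t]g^{j-1}$ and the right-hand side equals $\sum_{u+t=i-j-1}g_{u}\,[z^t]g^{j-1}$. Taking $j=1$ forces $g_{2k+1}\equiv g_k$ for all $k$, and conversely this single family of congruences makes the two sides agree for every $j$. So $G_n$ is io-decomposable if and only if $g_{2k+1}\equiv g_k$ for all relevant $k$, equivalently $g_{\mathrm{odd}}(z)\equiv z\,g(z^2)$, where $g_{\mathrm{odd}}$ is the odd part of $g$.

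It remains to translate this into a statement about the $A$-sequence. For the Bell type, \eqref{e:eq12} gives $g=A(zg)=\sum_{m\ge 0}a_m z^m g^m$; grouping the sum by the parity of $m$ and using $g^2\equiv g(z^2)$ together with $z^2g(z^2)=f(z^2)$ yields the identity $g(z)\equiv A^{[0]}(f(z^2))+z\,g(z)\,A^{[1]}(f(z^2))$, where $A^{[0]}(w):=\sum_p a_{2p}w^p$ and $A^{[1]}(w):=\sum_p a_{2p+1}w^p$. Since $A^{[0]}(f(z^2))$ and $A^{[1]}(f(z^2))$ involve only even powers of $z$, splitting this identity into even and odd parts, with $g=u+v$, $u=g_{\mathrm{even}}$, $v=g_{\mathrm{odd}}$, gives $v\equiv z\,A^{[1]}(f(z^2))\,u$ and $u\big(1+z^2A^{[1]}(f(z^2))^2\big)\equiv A^{[0]}(f(z^2))$, and hence $g(z^2)\equiv g(z)^2\equiv u^2+v^2\equiv u\,A^{[0]}(f(z^2))$ (the last step using the previous two relations). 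Feeding these into the condition $v\equiv z\,g(z^2)$ and cancelling the factor $z$ and the unit $u$ (note $u(0)=g_0=1$), the condition becomes $A^{[1]}(f(z^2))\equiv A^{[0]}(f(z^2))$, that is $A^{[1]}(f(z))\equiv A^{[0]}(f(z))$. Since $f$ has a compositional inverse over $\mathbb{Z}$ and reduction mod $2$ commutes with composition, this is equivalent to $A^{[1]}\equiv A^{[0]}$, i.e.\ $a_{2k+1}\equiv a_{2k}$ for every $k\ge 0$ (in particular $a_1\equiv a_0=1$) --- exactly the asserted form $(1,1,a_2,a_2,a_4,a_4,\dots)$ of the binary $A$-sequence.

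The main obstacle I anticipate is the parity bookkeeping in the second paragraph: one has to keep the even/odd splittings of $g$ and of $A$ consistent through the substitution $w\mapsto f(w)$ modulo $2$ and be sure the Frobenius identity is applied to the right series. Nothing conceptually deep is involved. A minor caveat is the dependence on $n$: io-decomposability of $G_n$ only constrains $g_0,\dots,g_{n-2}$ and hence only a finite prefix of the $A$-sequence, so all the congruences above are to be read for the indices relevant to $G_n$; matching the ranges is routine, and the clean statement for all $k$ is the limiting case $n\to\infty$.
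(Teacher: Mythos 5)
The paper does not prove this lemma; it is quoted verbatim from \cite{CJKM1}, so there is no in-paper argument to compare against. Judged on its own, your proof is correct and self-contained. The reduction of io-decomposability to the single family of congruences $g_{2k+1}\equiv g_k$ is right: the Frobenius identity $g^{2j}\equiv g(z^2)^j$ kills $\left<V_e\right>$ for free, and writing $g^{2j-1}\equiv g(z)g(z^2)^{j-1}$ shows that the $j=1$ case of the odd-block condition already implies all the others, since both sides are the same convolution against $[z^t]g^{j-1}$ with $g_{2u+1}$ replaced by $g_u$. The translation to the $A$-sequence via $g=A(zg)$, the even/odd splitting $u+v$, and the unconditional identities $v\equiv zuA^{[1]}(f(z^2))$ and $g(z^2)\equiv uA^{[0]}(f(z^2))$ is also sound; every step in the final chain of equivalences is reversible because $u$ is a unit ($g_0=1$ by properness, which is implicitly required for io-decomposability and for the $A$-sequence to exist) and $f$ is invertible under composition mod $2$. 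Your closing caveat is the one genuinely delicate point: for a fixed finite $n$ the argument only pins down $a_j$ for $j\le n-2$, which is consistent with how the paper itself uses the $A$-sequence (it notes after \eqref{recurrence-relation-Bell} that only $(1,a_1,\ldots,a_{n-2})$ is needed to determine $\mathcal{A}(G_n)$), so the ``for all $j\ge1$'' in the statement should be read with that understanding or as the $n\to\infty$ case. I see no gap.
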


 Let $G_n$ be the io-decomposable Riordan
graph of the Bell type with its binary $A$-sequence generating
function $A(z)=\sum_{i=0}^{15}z^i$. By Lemma \ref{charac-io-decomp},
the Riordan graph $G_n$ is io-decomposable. By using the sage, we
compare the diameters between $CG_n$ and $G_n$ up to degree $n=100$.
Then we obtain the following 13 counterexamples for the upper bound
of Conjecture \ref{conj1}.
\begin{center}
\begin{tabular}{c|c|c|c|c|c|c|c|c|c|c|c|c|c}

  $n$          & 44 & 45 & 46 & 47 & 48 & 78 & 79 & 80 & 87 & 88 & 89 & 90 & 91 \\\hline
  diam$(CG_n)$ &  3 &  3 &  3 &  3 &  3 &  3 &  3 &  3 &  3 &  3 &  3 &  3 &  3 \\\hline
  diam$(G_n)$  &  4 &  4 &  4 &  4 &  4 &  4 &  4 &  4 &  4 &  4 &  4 &  4 &  4
\end{tabular}
\end{center}

If for a Riordan graph $G_n(g,f)$ with $[z^1]f=1$, the relabelling
is done by {\em reversing} the vertices in $[n]$, that is, by
replacing a label $i$ by $n+1-i$ for each $i\in[n]$, then the
resulting graph will always be a Riordan graph given by the
following lemma. We denote the reverse relabelling of $G_n$ by
$G_n^r$.

\begin{lemma}\cite{CJKM1}\label{e:reverse}
The reverse relabelling of a Riordan graph $G_n(g,f)$ with $f'(0)=1$
is the Riordan graph
\begin{align*}
G_n^r(g,f)=G_n(g(\bar f)\cdot (\bar f)'\cdot(z/\bar f)^{n-1},\bar f)
\end{align*}
where $\bar f$ is the compositional inverse of $f$.
\end{lemma}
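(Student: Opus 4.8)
The plan is to compute the adjacency matrix of the reverse relabelling directly and then recognize it as a Riordan graph. First I would recall that for a Riordan graph $G_n(g,f)$ the adjacency matrix satisfies $\mathcal{A}(G_n)\equiv(zg,f)_n+(zg,f)_n^T$, so it suffices to understand how the lower-triangular part $(zg,f)_n$ transforms under the vertex permutation $i\mapsto n+1-i$. Relabelling by this reversal conjugates the adjacency matrix by the anti-diagonal permutation matrix $J_n$ (the $n\times n$ matrix with $1$'s on the anti-diagonal), i.e. $\mathcal{A}(G_n^r)=J_n\,\mathcal{A}(G_n)\,J_n$. Since $J_n^T=J_n$ and $(zg,f)_n^T$ is the transpose of $(zg,f)_n$, we get $J_n\mathcal{A}(G_n)J_n\equiv \big(J_n(zg,f)_nJ_n\big)+\big(J_n(zg,f)_nJ_n\big)^T$, so the whole problem reduces to identifying $J_n(zg,f)_nJ_n$ modulo $2$ as the leading principal submatrix of $(zh,\bar f)$ for the claimed series $h=g(\bar f)\cdot(\bar f)'\cdot(z/\bar f)^{n-1}$.

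Next I would carry out this identification using the entrywise description. The $(i,j)$ entry of $J_n L J_n$ is the $(n+1-i,n+1-j)$ entry of $L$; so for $L=(g,f)$ (the ordinary, not shifted, Riordan matrix) this is $[z^{n-i}]gf^{n-j}$. The key computational step is a "coefficient-reversal" identity: for a power series $\phi$ with $\phi(0)\ne 0$ one has $[z^{m}]\phi = [z^{N-m}]\,z^{N}\phi(1/z)$ type manipulations, but the clean tool here is the substitution $z\mapsto \bar f$ combined with the Lagrange–Bürmann / transfer formula. Concretely, I expect the identity
\begin{align*}
[z^{n-i}]\,g(z)f(z)^{n-j} = [z^{\,i-1}]\,g(\bar f)\cdot(\bar f)'\cdot (z/\bar f)^{\,n-1}\cdot \bar f^{\,j-1}
\end{align*}
to hold, which is exactly the statement that the reversed matrix $J_n(g,f)_nJ_n$ equals $(h,\bar f)_n$ with $h$ as claimed; shifting by one row (the $zg$ versus $g$ discrepancy coming from the definition of $\mathcal{A}$) then gives the stated form $G_n^r(g,f)=G_n(h,\bar f)$. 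The verification of the displayed identity is the technical heart: one writes $[z^{n-i}]gf^{n-j}$ as a contour integral $\frac1{2\pi i}\oint \frac{g(z)f(z)^{n-j}}{z^{n-i+1}}\,dz$, substitutes $z=\bar f(w)$ so that $f(z)=w$, $dz=(\bar f)'(w)\,dw$, and reorganizes the powers of $w$ and $\bar f(w)$; the exponent bookkeeping $-(n-i+1)+1 = i-n$ on $z=\bar f$, together with the $f^{n-j}=w^{n-j}$ factor, produces precisely $w^{j-1}$ after pulling out $(z/\bar f)^{n-1}=(\bar f(w)/w)^{-(n-1)}$... wait, more carefully $(\bar f)^{\,i-n}=(\bar f)^{\,j-1}\cdot(\bar f)^{\,i-n-j+1}$ and the leftover powers assemble into $(z/\bar f)^{n-1}$ evaluated along the substitution. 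I would organize this so the only thing to check is a clean exponent identity plus the observation $f(0)=0$, $f'(0)=1$ guarantees $\bar f$ exists with $\bar f'(0)=1$, so all the series live in $\kappa[[z]]$ and the leading principal submatrices are well defined.

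The main obstacle I anticipate is getting the index shift exactly right: the adjacency matrix uses $(zg,f)_n$, not $(g,f)_n$, so a reversal that is transparent for $(g,f)$ picks up an extra power of $z$ that must be absorbed into the $(z/\bar f)^{n-1}$ factor, and it is easy to be off by one in the exponent $n-1$. I would pin this down by checking the formula on a small explicit example from the paper — the Catalan graph $CG_6=G_6(C(z),zC(z))$, for which $\bar f = z(1-z)$ and $(\bar f)'=1-2z$ — computing $G_6^r$ both by literally reversing the labels in $\mathcal{A}(CG_6)$ (already displayed in Figure \ref{Catalan graph of order 6}) and by the formula, and confirming they agree; this simultaneously validates the exponent and serves as a sanity check that no sign/parity issue (recall everything is mod $2$) has been mishandled. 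The remaining steps — symmetry of the construction under transpose, and the fact that the result is again a genuine Riordan graph because $\bar f(0)=0$ — are routine.
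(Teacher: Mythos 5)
This lemma is quoted from \cite{CJKM1}; the present paper contains no proof of it, so there is no internal argument to compare yours against and I assess the proposal on its own. Your strategy is the right one: conjugating $\mathcal{A}(G_n)$ by the anti-diagonal permutation matrix $J_n$ reduces everything to a single coefficient identity, and that identity is exactly the kind of statement the substitution $z=f(w)$ in a residue integral proves. The method does go through.

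However, the identity you display is not the one you need, and it is false: apply your own substitution to its right-hand side and you obtain $[w^{n-j-1}]g(w)f(w)^{n-1-i}$, which is not $[z^{n-i}]g f^{n-j}$ --- the power of $f$ and the extracted coefficient are both off --- so the computation you propose refutes rather than proves the formula as written. Two separate index slips are responsible. First, $J_n(zg,f)_nJ_n$ is \emph{upper} triangular, so it must be matched with $(zh,\bar f)_n^{T}$; concretely, for $i>j$ the $(i,j)$ entry of $\mathcal{A}(G_n^r)$ is the $(n+1-j,\,n+1-i)$ entry of $(zg,f)_n$ (note the swap of $i$ and $j$), namely $[z^{n-j-1}]gf^{n-i}$. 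Second, the extra factor of $z$ in $(zg,f)_n$ versus $(g,f)_n$ is precisely what produces the exponent $n-1$ rather than $n$; running your computation on the unshifted matrix forces $(z/\bar f)^{n}$, contradicting the lemma you are trying to prove. The identity that must be verified is, for $1\le j<i\le n$,
\begin{align*}
[z^{\,n-j-1}]\,g\,f^{\,n-i}\;=\;[z^{\,i-2}]\;g(\bar f)\cdot(\bar f)'\cdot\bigl(z/\bar f\bigr)^{n-1}\cdot\bar f^{\,j-1},
\end{align*}
and your substitution $z=f(w)$, $dz=f'(w)\,dw$, $(\bar f)'(f(w))=1/f'(w)$ turns the right-hand side into $\frac{1}{2\pi i}\oint g(w)f(w)^{n-i}w^{j-n}\,dw$, which is the left-hand side. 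Since you explicitly flagged the off-by-one danger and proposed the $CG_6$ check that would have exposed it, I regard this as a repairable bookkeeping error rather than a missing idea; but the argument is not correct until your displayed identity is replaced by the one above.
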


Now, we prove that the upper bound in Conjecture \ref{conj1} is true
if $n=2^k$, $n=2^k-1$ or $n=1+2^m+2^k$ where $k\ge2$ and $1\le m<k$.

\begin{lemma}\label{reverse-relabelling-A-sequence-io}
Let $G_n=G_n(g,zg)$ be a proper Riordan graph and $A(z)$ be the
generating function for its binary $A$-sequence. Then the reverse
relabelling of $G_n$ is the Riordan graph given by
\begin{align*}
G_n^r=G_n((zA(z))'\cdot A^{n-2}(z),z/A(z)).
\end{align*}
In particular, if $G_n$ is an io-decomposable Riordan graph of the
Bell type then the reverse relabelling of $G_n$ is the Riordan graph
given by
\begin{align*}
G_n^r=G_n(A'(z)\cdot A^{n-2}(z),z/A(z)).
\end{align*}
\end{lemma}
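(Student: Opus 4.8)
The plan is to apply the reverse-relabelling formula of Lemma~\ref{e:reverse} and then simplify the two resulting series by means of the $A$-sequence relation~\eqref{e:eq12}, using throughout that the adjacency matrix of a Riordan graph, being $(zg,f)_n+(zg,f)_n^T$ reduced modulo~$2$, depends on $(g,f)$ only through $g\bmod 2$ and $f\bmod 2$; hence we may freely alter the defining series of $G_n^r$ modulo~$2$.

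First I would specialise the hypotheses to the Bell type. Since $G_n=G_n(g,zg)$ is proper, $f:=zg$ has $f'(0)=1$, so Lemma~\ref{e:reverse} gives $G_n^r=G_n\big(g(\bar f)\cdot(\bar f)'\cdot(z/\bar f)^{\,n-1},\,\bar f\big)$. Now~\eqref{e:eq12} reads $f=zA(f)$, so $A(f)=f/z=g$ and, composing on the right with $\bar f$, $g(\bar f)=A(z)$; moreover the equivalent form $A=z/\bar f$ gives $\bar f=z/A$ and $z/\bar f=A$. Substituting these, the second component of $G_n^r$ becomes $z/A(z)$ and the first becomes
\begin{align*}
g(\bar f)\cdot(\bar f)'\cdot(z/\bar f)^{\,n-1}=A\cdot\Big(\frac{z}{A}\Big)'\cdot A^{\,n-1}=A^{\,n}\cdot\frac{A-zA'}{A^{2}}=\big(A(z)-zA'(z)\big)A^{\,n-2}(z).
\end{align*}
Since $-zA'\equiv zA'$ and $(zA)'=A+zA'$, we have $(A-zA')A^{n-2}\equiv(zA)'A^{n-2}$ modulo~$2$, so by the invariance noted above $G_n^r=G_n\big((zA)'A^{n-2},\,z/A\big)$, which is the first assertion. (One should also record that $z/A$ vanishes at $0$ with derivative $1$ and that the first series has constant term $1$, both immediate from $A(0)=1$, so these really are Riordan-graph data.)

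For the ``in particular'' statement I would invoke Lemma~\ref{charac-io-decomp}: if $G_n$ is io-decomposable of the Bell type, then $A(z)\equiv(1+z)B(z^{2})=B(z^{2})+zB(z^{2})$ with $B(z)=\sum_{j\ge0}a_{2j}z^{j}$ and $a_0=1$. Differentiating and using that the derivative of a power series in $z^{2}$ vanishes modulo~$2$ gives $A'(z)\equiv B(z^{2})$, hence $(1+z)A'(z)\equiv(1+z)B(z^{2})\equiv A(z)$. Therefore $(zA)'-A'=A+zA'-A'=A+(z-1)A'\equiv A+(1+z)A'\equiv A+A\equiv 0$, i.e. $(zA)'\equiv A'$ modulo~$2$, whence $(zA)'A^{n-2}\equiv A'A^{n-2}$ and, by the same argument, $G_n^r=G_n\big(A'(z)A^{n-2}(z),\,z/A(z)\big)$.

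All of this is routine formal power-series algebra; the only point that deserves attention is the legitimacy of the two modular substitutions — replacing $A-zA'$ by $(zA)'$, and in the io-decomposable case $(zA)'$ by $A'$ — which is precisely the remark that a Riordan graph is determined by its generating pair only modulo~$2$, together with a quick verification that the substitutions preserve the required normalisations $[z^0](\,\cdot\,)=1$ and $f'(0)=1$. I do not anticipate any essential difficulty beyond this bookkeeping.
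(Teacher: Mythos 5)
Your proof is correct and follows essentially the same route as the paper's: apply Lemma~\ref{e:reverse}, use \eref{e:eq12} to rewrite $g(\bar f)$, $\bar f$ and $(\bar f)'$ in terms of $A$, reduce modulo $2$ to obtain $(zA(z))'\cdot A^{n-2}(z)$, and invoke Lemma~\ref{charac-io-decomp} to get $(zA(z))'\equiv A'(z)$ in the io-decomposable Bell-type case. The only difference is that you spell out the verification of $(zA(z))'\equiv A'(z)$ via $A(z)\equiv(1+z)B(z^2)$, which the paper leaves implicit.
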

\begin{proof} Let $f=zg$ and $\bar f$ be the compositional inverse
of $f$. Since \eref{e:eq12} leads to
\begin{align*}
g(\bar f)=z/\bar f=A(z)\quad {\rm and}\quad (\bar f)'=\left(z\over
A(z)\right)'\equiv{A(z)+zA'(z)\over A^2(z)}=(zA(z))'A^{-2}(z),
\end{align*}
we obtain
\begin{align*}
g(\bar f)\cdot (\bar f)'\cdot(z/\bar f)^{n-1}\equiv(zA(z))'\cdot
A^{n-2}(z)\quad {\rm and}\quad \bar f=z/A(z).
\end{align*}
Thus, by Lemma \ref{e:reverse}, we obtain the desired result. In
particular, if $G_n$ is an io-decomposable Riordan graph of the Bell
type then it follows from Lemma \ref{charac-io-decomp} that
$(zA(z))'\equiv A'(z)$. Hence the proof follows.
\end{proof}

If the base $p$ (a prime) expansion of $n$ and $m$ is
$n=n_0+n_1p+n_2p^2+\cdots$ and $m=m_0+m_1p+m_2p^2+\cdots$
respectively then
\begin{eqnarray*}
{n\choose m}\equiv\prod_i{n_i\choose m_i}\; ({\rm mod}\; p).
\end{eqnarray*}
This is called the Lucas's theorem.

Let $G_n$ be an io-decomposable Riordan graph of the Bell type.
Since diam$(G_{n})\le k$ if $n=2^k$ with $k\ge0$ by (iv) of Lemma
\ref{properties-of-io-decomposable}, the following theorem shows
that the upper bound of Conjecture \ref{conj1} is true if $n=2^k$
for $k\ge1$. We denote the distance between two vertices $u,v$ in a
graph $G$ by $d_G(u,v)$.

\begin{theorem}\label{diam-Catalan}
For an integer $k\ge1$, we obtain
\begin{align*}
{\rm diam}(CG_{2^k})=k.
\end{align*}
\end{theorem}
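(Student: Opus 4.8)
The plan is to prove the two inequalities $\mathrm{diam}(CG_{2^k})\le k$ and $\mathrm{diam}(CG_{2^k})\ge k$ separately. The upper bound is essentially free: the Catalan graph $CG_{2^k}=G_{2^k}(C(z),zC(z))$ is an io-decomposable Riordan graph of the Bell type, so part (iv) of Lemma \ref{properties-of-io-decomposable} immediately gives $\mathrm{diam}(CG_{2^k})\le\lfloor\log_2 2^k\rfloor=k$. (The text preceding the theorem statement already flags this.) So the entire content is the lower bound: I must exhibit two vertices $u,v\in[2^k]$ at distance exactly $k$, equivalently show that no shorter path joins them.

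For the lower bound I would work with an explicit description of adjacency in $CG_n$. Using $f=zC(z)$ and the fact that the Riordan matrix $(C(z),zC(z))$ has entries $\ell_{i,j}=\frac{j+1}{i+1}\binom{2i-j}{i-j}$ (the ballot numbers), the adjacency rule $r_{i,j}\equiv[z^{i-2}]C\cdot(zC)^{j-1}$ says that $i\sim j$ (for $i>j$) iff a certain ballot number is odd. The natural candidate pair is $u=1$ and $v=2^k$: since $CG_n$ is proper and io-decomposable, vertex $1$ lies in $V_o$ and is ``far'' from the top vertex. I would then argue by strong induction on $k$, using the io-decomposition: the odd vertices induce a copy of $CG_{2^{k-1}}$, the even vertices form an independent set, and every path must alternate in a controlled way between the two parts. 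More precisely, I expect that a shortest $1$–$2^k$ path, when its even vertices are deleted and the odd vertices are relabelled by $i\mapsto(i+1)/2$, projects to a walk in $CG_{2^{k-1}}$ that is no shorter than the original divided by roughly $2$, while the step from vertex $1$ into the ``last block'' $\{2^{k-1}+1,\dots,2^k\}$ costs at least one extra edge because $1$ is not adjacent to $2^k$ (one checks $r_{2^k,1}=[z^{2^k-2}]C$ is even for $k\ge1$ by Lucas's theorem applied to the Catalan number $C_{2^k-2}$). Chaining these, $\mathrm{diam}(CG_{2^k})\ge 1+\mathrm{diam}(CG_{2^{k-1}})\ge 1+(k-1)=k$.

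An alternative and perhaps cleaner route for the lower bound uses Lemma \ref{properties-of-io-decomposable}(iii): the clique number of $CG_{2^k}$ is $\lceil\log_2 2^k\rceil+1=k+1$. Take a maximum clique $\{w_0<w_1<\dots<w_k\}$. I would show that the two ``extreme'' members $w_0$ and $w_k$ of this clique — or rather a vertex attached to $w_0$ but to none of $w_1,\dots,w_k$, compared with one attached to $w_k$ only — realize the diameter, by exploiting the layered/multipartite structure: the $(k+1)$-partition witnessing $k+1$-partiteness forces any path between vertices in the two end layers to traverse intermediate layers, costing $k$ edges. The bookkeeping here is to identify the partition classes explicitly in terms of the binary expansions of the labels (via Lucas), and to verify that in $CG_{2^k}$ — as opposed to a general io-decomposable graph of that order — there genuinely exist two vertices forced through all $k$ layer-transitions, which is exactly where $CG$ is extremal (cf. Conjecture \ref{conj2}).

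The main obstacle is the lower bound's combinatorial core: controlling shortest paths in $CG_{2^k}$ via the io-decomposition. The subtlety is that a path in $CG_n$ may use even vertices as ``shortcuts,'' so the projection-to-$CG_{n/2}$ argument is not a clean retraction; I will need the precise fact that in an io-decomposable Bell-type graph every even vertex has all its neighbours among the odd vertices, plus a quantitative statement (likely proved by examining the ballot-number parities through Lucas's theorem) that two consecutive odd vertices on a path cannot be ``bridged'' by an even vertex unless they were already within distance $2$ in $\langle V_o\rangle$. Making that bridging lemma tight enough to yield the exact bound $+1$ per halving, rather than a lossy $+1$ per quartering, is the crux; everything else is either the cited Lemma \ref{properties-of-io-decomposable} or a routine Lucas-theorem parity computation on Catalan numbers $C_{2^j-2}$.
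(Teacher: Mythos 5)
Your upper bound is fine and matches the paper: Lemma \ref{properties-of-io-decomposable}(iv) gives $\mathrm{diam}(CG_{2^k})\le k$ at once. The problem is that your lower bound is a plan rather than a proof, and the step you yourself flag as ``the crux'' is exactly the step that is missing. Concretely, you want $d_{CG_{2^k}}(1,2^k)\ge 1+\mathrm{diam}(CG_{2^{k-1}})$ by projecting a shortest path onto the odd vertices, but since a single even vertex $2j$ could a priori join two odd vertices $u,w$ that are far apart in $\langle V_o\rangle\cong CG_{2^{k-1}}$, deleting the even vertices need not produce a walk in $CG_{2^{k-1}}$ at all, let alone one of comparable length. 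Without the quantitative ``bridging lemma'' (that $u\sim 2j\sim w$ forces $u,w$ to be adjacent, or at least at controlled distance, in $\langle V_o\rangle$) the induction does not close, and you give no argument for it. Your second route has the same status: the $(k+1)$-partition from Lemma \ref{properties-of-io-decomposable}(ii) does not by itself force paths to traverse the classes one at a time (a multipartite graph with any number of parts can have diameter $2$), so ``two vertices forced through all $k$ layer-transitions'' is an assertion, not a consequence of the cited lemmas. Note also that checking $1\not\sim 2^k$ via Lucas only yields $d(1,2^k)\ge 2$; the full inequality $d(1,2^k)\ge k$ is the entire content of the theorem and is nowhere established in your sketch.

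The paper sidesteps all of this by changing coordinates. The $A$-sequence of $(C,zC)$ is $1/(1-z)$, so by Lemma \ref{reverse-relabelling-A-sequence-io} together with Lucas's theorem the reverse relabelling satisfies $CG_{2^k}^r=G_{2^k}(1,z+z^2)$. In that graph the largest neighbour of a vertex $i\le 2^{k-1}$ is
$m_i=\max\{j : [z^{j-2}](z+z^2)^{i-1}\equiv 1\}=2i$,
because $(z+z^2)^{i-1}=z^{i-1}(1+z)^{i-1}$ has top degree $2i-2$ with coefficient $1$. Hence any path starting at vertex $1$ can at most double its label at each step, which immediately gives $d_{CG_{2^k}^r}(1,2^k)=k$ via the path $1\to 2\to 4\to\cdots\to 2^k$. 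That ``at most doubling'' mechanism is the concrete fact your outline lacks; if you want to complete your argument, proving $m_i=2i$ (in the reversed labelling, or its analogue in the original one) is the lemma to aim for.
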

\begin{proof} First we show that
$CG_{2^k}^r=G_{2^k}(1,z+z^2)$ for $k\ge1$. It follows from
\eref{Catalan-gf} and \eref{e:eq12} that the generating function for
$A$-sequence of $(C,zC)$ is ${1\over 1-z}$. By Lemma
\ref{reverse-relabelling-A-sequence-io}, we obtain
\begin{align*}
A'(z)\cdot A^{2^k-2}(z)&=\left({1\over
1-z}\right)^2\cdot\left({1\over 1-z}\right)^{2^k-2}=\left({1\over
1-z}\right)^{2^k}
\end{align*}
so that by Lemma \ref{e:reverse} the reverse relabelling of the
Catalan graph is
\begin{align}\label{reverse-relabelling-CG}
CG_{2^k}^r=G_{2^k}((1-z)^{-2^k},z+z^2).
\end{align}
Since $(1-z)^{-2^k}=\sum_{j\ge0}{2^k+j-1\choose 2^k-1}z^j$, by
Lucas's theorem we obtain
\begin{align*}
{2^k+j-1\choose 2^k-1}\equiv 1\;\;\textrm{for $j=0$} \quad{\rm
and}\quad {2^k+j-1\choose 2^k-1}\equiv0\;\;\textrm{for $1\le j\le
2^k-1$}
\end{align*}
which imply $G_{2^k}((1-z)^{-2^k},z+z^2)=G_{2^k}(1,z+z^2)$. Thus, by
\eref{reverse-relabelling-CG}, $CG_{2^k}^r=G_{2^k}(1,z+z^2)$. Let
$m_i={\rm max}\{j\in V(CG_{2^k}^r)\;|\;ij\in E(CG_{2^k}^r)\}.$ For
each $i\in\{1,2,\ldots,2^{k-1}\}$, we obtain
\begin{align}\label{equation}
m_i={\rm max}\{j\in
V(CG_{2^k}^r)\;|\;[z^{j-2}](z+z^2)^{i-1}\equiv1\}=2i.
\end{align}
Since $m_1<m_2<\cdots<m_{2^{k-1}}$, a unique shortest path from 1 to
$2^k$ in $CG_{2^k}^r$ is
$2^0\rightarrow2^1\rightarrow\cdots\rightarrow2^{k-1}\rightarrow2^k$
so that $d_{CG_{2^k}^r}(1,2^k)=k$. Hence, by (iv) of Lemma
\ref{properties-of-io-decomposable}, we obtain the desired result.
\end{proof}

By Lemma \ref{charac-io-decomp}, the following lemma is obtained
from \cite[Theorem 3.7]{CJKM1} when $\ell=0$.

\begin{lemma}\label{fractal-Riordan-graph}
Let $G_n$ be an io-decomposable Riordan graph of the Bell type and
$G=\lim_{n\rightarrow\infty}G_n$. For each $s\ge 0$, $G$ has the
following fractal properties:
\begin{itemize}
\item[{\rm(i)}] $G_{2^s+1}=\left<\{1,2,\ldots,2^{s}+1\}\right>\cong
\left<\{\alpha2^{s}+1,\alpha2^{s}+2,\ldots,(\alpha+1)2^{s}+1\}\right>$;
\item[{\rm(ii)}]$G_{2^s}=\left<\{1,2,\ldots,2^{s}\}\right>\cong
\left<\{\alpha2^{s}+1,\alpha2^{s}+2,\ldots,(\alpha+1)2^{s}\}\right>$
\end{itemize}
where $\alpha\ge1$.
\end{lemma}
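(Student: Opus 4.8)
I would realize the asserted isomorphism explicitly as the shift map and reduce the whole statement to a single congruence of power‑series coefficients modulo $2$. Write $f=zg$, so that in the infinite graph $G=\lim_n G_n$ the adjacency entries are, for $i>j$,
\[
r_{i,j}\equiv[z^{i-2}]gf^{j-1}=[z^{i-1-j}]g^{j}.
\]
The equalities $G_{2^s+1}=\langle\{1,\dots,2^s+1\}\rangle$ and $G_{2^s}=\langle\{1,\dots,2^s\}\rangle$ are immediate, since the adjacency matrix of $G_m$ is by construction the leading principal submatrix of order $m$ of the adjacency matrix of $G$. So the content of (i) is to show that, for fixed $s\ge0$ and $\alpha\ge1$, the map $\phi(\alpha 2^{s}+t)=t$ $(1\le t\le 2^{s}+1)$ is a graph isomorphism from $\langle B\rangle$ onto $\langle\{1,\dots,2^{s}+1\}\rangle$, where $B=\{\alpha 2^{s}+1,\dots,(\alpha+1)2^{s}+1\}$; equivalently, that
\[
[z^{\,t-1-t'}]\,g^{\alpha 2^{s}+t'}\ \equiv\ [z^{\,t-1-t'}]\,g^{t'}\qquad\text{for all }1\le t'<t\le 2^{s}+1 .
\]

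\textbf{Key step.} This follows from a Frobenius computation over $\mathbb{F}_2$. Since $G_n$ is proper, $[z^0]g=1$, hence modulo $2$ we have $g^{2^{s}}\equiv g(z^{2^{s}})=1+\sum_{j\ge1}([z^{j}]g)\,z^{2^{s}j}$, which agrees with $1$ in every coefficient of degree $<2^{s}$. Raising to the $\alpha$‑th power keeps this property, so $g^{\alpha 2^{s}}$ agrees with $1$ in all coefficients of degree $<2^{s}$, and therefore $g^{\alpha 2^{s}+t'}=g^{\alpha 2^{s}}\cdot g^{t'}$ agrees with $g^{t'}$ in all coefficients of degree $<2^{s}$. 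Finally, from $1\le t'<t\le 2^{s}+1$ one gets $0\le t-1-t'\le 2^{s}-1<2^{s}$, so extracting the coefficient of $z^{\,t-1-t'}$ gives the displayed identity. This proves (i).

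\textbf{Part (ii) and a remark.} For (ii) no further work is needed: $\{1,\dots,2^{s}\}$ and $\{\alpha 2^{s}+1,\dots,(\alpha+1)2^{s}\}$ are exactly the first $2^{s}$ vertices of the two windows in (i), so the restriction of $\phi$ is the required isomorphism (the degree bound $t-1-t'\le 2^{s}-2<2^{s}$ is then even easier). I would also note that this lemma is precisely the $\ell=0$ case of \cite[Theorem 3.7]{CJKM1}: Lemma~\ref{charac-io-decomp} identifies $G_n(g,zg)$ as the Bell‑type Riordan graph to which that theorem applies, and the computation above is exactly what makes the specialization work (indeed properness alone, not io‑decomposability, already drives the argument).

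\textbf{Main obstacle.} There is no deep obstacle here; the computation is essentially the identity $(\,\cdot\,)^{2^{s}}\equiv(\,\cdot\,)(z^{2^{s}})$ in characteristic $2$. The one point to be careful about is the bookkeeping on the window width: the blocks of size $2^{s}+1$ (resp.\ $2^{s}$) are exactly as large as one can take while still forcing $t-1-t'<2^{s}$, so that $g^{\alpha 2^{s}}\equiv1$ "in the relevant range" determines all the adjacency coefficients. A window one vertex wider would involve $[z^{2^{s}}]g^{\alpha 2^{s}}$, which need not vanish, so the size of the repeated block cannot be improved and the delicate step is simply to pin it down correctly.
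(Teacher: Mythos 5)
Your proof is correct, but it takes a genuinely different route from the paper: the paper offers no self-contained argument, simply deriving the lemma from \cite[Theorem 3.7]{CJKM1} in the case $\ell=0$ via the $A$-sequence characterization of Lemma \ref{charac-io-decomp}, whereas you prove the statement directly. Your reduction of Bell-type adjacency to $r_{i,j}\equiv[z^{i-1-j}]g^{j}$, followed by the Frobenius identity $g^{2^s}\equiv g(z^{2^s}) \;({\rm mod\;2})$ and the observation that $g^{\alpha 2^s}$ agrees with $1$ in all degrees below $2^s$ because $[z^0]g=1$, is exactly the right computation, and the window bookkeeping $0\le t-1-t'\le 2^s-1$ is handled correctly; part (ii) then indeed follows by restricting the shift isomorphism. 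What your approach buys is twofold: it makes the result self-contained rather than dependent on an external theorem, and it is more general than the stated lemma, since only properness ($[z^0]g=1$, guaranteed by Definition \ref{io/ie-decomposable-def}) and the Bell condition $f=zg$ are used, while io-decomposability plays no role. The paper's citation route, by contrast, is shorter on the page and situates the lemma inside the broader fractal theory of \cite{CJKM1}, at the cost of hiding the (quite elementary) mechanism you exhibit.
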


We can ask that how many vertex pairs $(u,v)$ can have the maximal
distance $k$ in $CG_{2^k}$. By using Lemma
\ref{fractal-Riordan-graph}, the answer is given by the following
theorem.

\begin{theorem}\label{Catalan-graph-distance-k}
Let $k\ge1$ be an integer. There exist exactly $2^{k-1}$ vertex
pairs $(i,2^k)$ with $i\in\{1,\ldots,2^{k-1}\}$ such that
$d_{CG_{2^k}}(i,2^k)=k$ is the maximal distance in $CG_{2^k}$.
\end{theorem}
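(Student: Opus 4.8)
The plan is to pass to the reverse relabelling $H := CG_{2^k}^r$, which was already identified in the proof of Theorem~\ref{diam-Catalan} as $H = G_{2^k}(1,z+z^2)$. Under the reverse relabelling $\phi\colon i\mapsto 2^k+1-i$ distances are preserved and $\phi(2^k)=1$, so $d_{CG_{2^k}}(i,2^k)=d_H(2^k+1-i,1)$; as $i$ runs over $\{1,\ldots,2^{k-1}\}$ the vertex $2^k+1-i$ runs over the block $\{2^{k-1}+1,\ldots,2^k\}$. Hence it suffices to prove that in $H$ every vertex of $\{2^{k-1}+1,\ldots,2^k\}$ lies at distance exactly $k$ from vertex $1$: the value $k$ is the maximal distance by Theorem~\ref{diam-Catalan}, and there are precisely $2^{k-1}$ such indices $i$.

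From the definition of a Riordan graph with $(g,f)=(1,z(1+z))$ one obtains the clean adjacency rule: for $i>j$, vertices $i$ and $j$ are adjacent in $H$ iff $\binom{j-1}{i-j-1}\equiv 1$, which by Lucas's theorem means that the binary digits of $i-j-1$ form a submask of those of $j-1$. Two consequences drive the argument. First, a \emph{lower-bound} fact: since $2^s-1$ is a string of $s$ ones in base $2$, $\binom{2^s-1}{m}\equiv 1$ for every $0\le m\le 2^s-1$, so vertex $2^s$ is adjacent to \emph{every} vertex of $\{2^s+1,\ldots,2^{s+1}\}$. Second, an \emph{upper-bound} fact: if $u\le 2^{j-1}$ and $v>2^j$, then $v>2u$, hence $v-u-1>u-1$ and $\binom{u-1}{v-u-1}=0$; so no vertex exceeding $2^j$ is adjacent to any vertex of $\{1,\ldots,2^{j-1}\}$.

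I would then prove by induction on $j$ that the set of vertices at distance at most $j$ from vertex $1$ in $H$ equals $\{1,2,\ldots,2^j\}$. The base case $j=0$ is trivial, and for $j=1$ one checks that the only neighbor of vertex $1$ in $H$ is vertex $2$. For the inductive step, the upper-bound fact shows the ball of radius $j$ is contained in $\{1,\ldots,2^j\}$, while the lower-bound fact applied with $s=j-1$ shows that every vertex of $\{2^{j-1}+1,\ldots,2^j\}$ is adjacent to $2^{j-1}$, which lies at distance at most $j-1$ from $1$ by the inductive hypothesis; thus the ball of radius $j$ also contains $\{1,\ldots,2^j\}$. Taking $j=k$ and removing the ball of radius $k-1$, the set of vertices at distance exactly $k$ from $1$ in $H$ is $\{2^{k-1}+1,\ldots,2^k\}$, of cardinality $2^{k-1}$. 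Translating back through $\phi$ and invoking Theorem~\ref{diam-Catalan} for maximality completes the proof.

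Once one moves to $H$ the argument is essentially free of obstacles; the one place demanding care is getting the two Lucas-based inequalities exactly right, in particular the strict inequality $v>2u$ in the upper-bound step, which is precisely what forbids any "long-range'' edge emanating from a low-indexed vertex. If one additionally wishes to assert that these $2^{k-1}$ pairs are \emph{all} the diametral pairs of $CG_{2^k}$, one needs the supplementary bound $d_H(u,v)\le k-1$ for all $u,v\ne 1$; this can be obtained by routing both $u$ and $v$ toward vertex $1$ along the chain of powers of two $1-2-4-\cdots-2^{k-1}$, using the ``adjacent to the whole next block'' property and bounding the length of the resulting walk, but it is not required for the statement as phrased.
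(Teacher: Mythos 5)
Your ball-growth argument in $H=CG_{2^k}^r=G_{2^k}(1,z+z^2)$ is correct, and for the first half of the theorem it is a genuinely different (and arguably cleaner) route than the paper's: the paper exhibits the explicit shortest path $2^0\rightarrow2^1\rightarrow\cdots\rightarrow2^{k-1}\rightarrow j$ using the fact that the largest neighbour of $i$ in $H$ is $2i$, whereas your identification of the radius-$j$ ball around vertex $1$ with $\{1,\ldots,2^j\}$, via the two Lucas facts $\binom{2^s-1}{m}\equiv1$ and $\binom{u-1}{v-u-1}=0$ for $v>2u$, packages the lower and upper bounds in a single induction and also yields $d_H(1,j)\le k-1$ for $j\le2^{k-1}$ for free.

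The gap is in your last paragraph. The theorem, as the paper reads and later uses it (it answers ``how many vertex pairs $(u,v)$ have the maximal distance $k$'', and Corollary \ref{corollary-catalan-diameter} is deduced from it), asserts that these $2^{k-1}$ pairs are \emph{all} the diametral pairs; accordingly the paper's proof restates the claim as $d_{CG_{2^k}^r}(i,j)\le k-1$ for every pair other than $(1,j)$ with $j>2^{k-1}$, and spends most of its length proving that bound. You declare this part optional and only sketch it, but the sketch as given fails: routing $u$ and $v$ separately toward vertex $1$ along the chain $1-2-4-\cdots-2^{k-1}$ produces a walk of length up to $2(k-1)$, not $k-1$, so no amount of ``bounding the length of the resulting walk'' rescues it for, say, $u=3$ and $v$ in the top block when $k$ is large. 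The paper closes this with the fractal property (Lemma \ref{fractal-Riordan-graph}): $\left<\{1,\ldots,2^{k-1}\}\right>\cong\left<\{2^{k-1}+1,\ldots,2^k\}\right>\cong CG_{2^{k-1}}^r$, which handles pairs inside one block by Theorem \ref{diam-Catalan}, and an induction on $k$ giving $d_H(u,v)\le d_{CG_{2^{k-1}}^r}(u,2^{k-1})+1\le k-1$ for $u\in\{2,\ldots,2^{k-1}\}$ and $v$ in the upper block. You would need to add an argument of this kind (or an equivalent strengthening of your ball computation to balls centred at arbitrary $u\neq1$) for the proof to match what the theorem is used to deliver.
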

\begin{proof} Since $CG_{2^k}^r$ is the reverse relabelling of $CG_{2^k}$, this
theorem is equivalent to the following:
\begin{itemize}
\item $d_{CG_{2^k}^r}(i,j)=k$ if $i=1$ and $j\in\{2^{k-1}+1,\ldots,2^k\}$;
\item $d_{CG_{2^k}^r}(i,j)\le k-1$ otherwise.
\end{itemize}
Since by (i) of Lemma \ref{properties-of-io-decomposable} the vertex
$2^{k-1}+1$ is adjacent to all vertices $1,\ldots,2^{k-1}$ in
$CG_{2^k}$, the vertex $2^{k-1}$ is adjacent to all vertices
$2^{k-1}+1,\ldots,2^k$ in $CG_{2^k}^r$. So by \eref{equation} the
shortest path from 1 to $j$ in $CG_{2^k}^r$ is
\begin{align*}
2^0\rightarrow2^1\rightarrow\cdots\rightarrow2^{k-1}\rightarrow
j\;\; {\rm where}\;\; j\in\{2^{k-1}+1,\ldots,2^k\}
\end{align*}
and thus $d_{CG_{2^k}^r}(i,j)=k$ if $i=1$ and
$j\in\{2^{k-1}+1,\ldots,2^k\}$.

Let $V_1=\{i\in V(CG_{2^k}^r)\;|\;1\le i\le 2^{k-1}\}$ and
$V_2=\{j\in V(CG_{2^k}^r)\;|\;2^{k-1}< j\le 2^k\}$. Since
$\left<V_1\right>\cong \left<V_2\right>\cong CG_{2^{k-1}}^r$ by
Lemma \ref{fractal-Riordan-graph} and $G_{2^{k-1}}\cong
CG_{2^{k-1}}$, it follows from Theorem \ref{diam-Catalan} that
\begin{align*}
d_{CG_{2^k}^r}(i,j)\le k-1\;\; \textrm{if $i,j\in V_1$ or $i,j\in
V_2$.}
\end{align*}
Now it is enough to show that $d_{CG_{2^k}^r}(i,j)<k$ if $i\in
V_1\backslash\{1\}$ and $j\in V_2$ for $k\ge2$. We prove this by
induction on $k\ge2$. Let $k=2$. Since the adjacency matrix of
$CG_{4}^r$ is given by
\begin{align*}
\mathcal{A}(CG_{4}^r)=\left(
                   \begin{array}{cccc}
                     0 & 1 & 0 & 0 \\
                     1 & 0 & 1 & 1 \\
                     0 & 1 & 0 & 1 \\
                     0 & 1 & 1 & 0 \\
                   \end{array}
                 \right),
\end{align*}
we see that $d_{CG_{4}^r}(2,3)=d_{CG_{4}^r}(2,4)=1<2$. Thus it holds
for $k=2$. Let $k\ge3$. Since $\left<V_1\right>\cong CG_{2^{k-1}}^r$
and the vertex $2^{k-1}$ is adjacent to all vertices $j\in V_2$ in
$CG_{2^k}^r$, we obtain
\begin{align*}
d_{CG_{2^k}^r}(i,j)&\le
d_{CG_{2^k}^r}(i,2^{k-1})+d_{CG_{2^k}^r}(2^{k-1},j)\le d_{CG_{2^{k-1}}^r}(i,2^{k-1})+1\\
&\le k-1\quad\textrm{(by induction)}
\end{align*}
where $i\in V_1\backslash\{1\}$ and $j\in V_2$. Hence the proof
follows.
\end{proof}

\begin{example}
{\rm Let us consider the Catalan graph $CG_8=G_8(C(z),zC(z))$ of
order 8. Since its reverse relabeling is $CG_8^r=G_8(1,z+z^2)$, we
obtain Figure \ref{Reverse relabeling of CG8} from the adjacency
matrix
\begin{align*}
\mathcal{A}(CG_8^r)=\left(
                      \begin{array}{cccccccc}
0 & 1 & 0 & 0 & 0 & 0 & 0 & 0 \\
1 & 0 & 1 & 1 & 0 & 0 & 0 & 0 \\
0 & 1 & 0 & 1 & 0 & 1 & 0 & 0 \\
0 & 1 & 1 & 0 & 1 & 1 & 1 & 1 \\
0 & 0 & 0 & 1 & 0 & 1 & 0 & 0 \\
0 & 0 & 1 & 1 & 1 & 0 & 1 & 1 \\
0 & 0 & 0 & 1 & 0 & 1 & 0 & 1 \\
0 & 0 & 0 & 1 & 0 & 1 & 1 & 0
\end{array}
                    \right).
\end{align*}
Thus we can see that the four vertex pairs $(1,5),(1,6),(1,7)$ and
$(1,8)$ in $CG_8^r$ have maximal distance 3 i.e., the four vertex
pairs $(8,4),(8,3),(8,2)$ and $(8,1)$ in $CG_8$ have the maximal
distance 3.

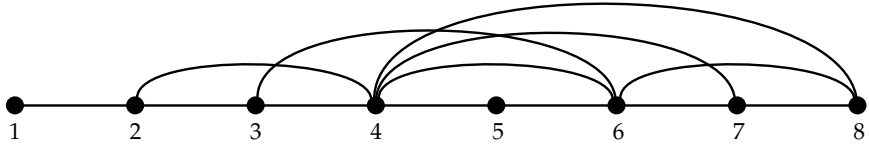
\begin{figure}
\begin{center}
\psscalebox{0.8 0.8} 
{
\begin{pspicture}(0,-1.1889503)(14.295673,1.1889503)
\psbezier[linecolor=black,
linewidth=0.04](4.1786056,-0.417541)(4.1786056,1.1516898)(10.116906,1.0286129)(10.132452,-0.4790794607309317)
\psbezier[linecolor=black,
linewidth=0.04](6.1170673,-0.44831023)(6.1170673,1.798543)(14.132291,1.6027387)(14.147837,-0.5560025376539983)
\psbezier[linecolor=black,
linewidth=0.04](6.1478367,-0.47907946)(6.1478367,1.0901513)(12.101441,1.0132282)(12.117067,-0.49446407611554605)
\psbezier[linecolor=black,
linewidth=0.04](6.19399,-0.417541)(6.19399,0.3670744)(10.070913,0.3055359)(10.086298,-0.44831022996169395)
\psline[linecolor=black,
linewidth=0.04](6.209375,-0.54061794)(8.132452,-0.54061794)
\psbezier[linecolor=black,
linewidth=0.04](2.1786058,-0.417541)(2.1786058,0.3670744)(6.0555286,0.3055359)(6.0709133,-0.44831022996169395)
\psline[linecolor=black,
linewidth=0.04](0.209375,-0.54061794)(2.009375,-0.54061794)
\psdots[linecolor=black, dotsize=0.3](0.14783654,-0.54061794)
\psdots[linecolor=black, dotsize=0.3](2.1478364,-0.54061794)
\psdots[linecolor=black, dotsize=0.3](4.1478367,-0.54061794)
\psdots[linecolor=black, dotsize=0.3](6.1478367,-0.54061794)
\psdots[linecolor=black, dotsize=0.3](8.147837,-0.54061794)
\psdots[linecolor=black, dotsize=0.3](10.147837,-0.54061794)
\psdots[linecolor=black, dotsize=0.3](12.147837,-0.54061794)
\psdots[linecolor=black, dotsize=0.3](14.147837,-0.54061794)
\rput[bl](0.055528842,-1.1){1} \rput[bl](2.0555289,-1.1){2}
\rput[bl](4.0555286,-1.1){3} \rput[bl](6.0555286,-1.1){4}
\rput[bl](8.086298,-1.1){5} \rput[bl](10.086298,-1.1){6}
\rput[bl](12.086298,-1.1){7} \rput[bl](14.086298,-1.1){8}
\psline[linecolor=black,
linewidth=0.04](2.1632211,-0.54061794)(6.0401444,-0.54061794)
\psbezier[linecolor=black,
linewidth=0.04](10.209375,-0.417541)(10.209375,0.3670744)(14.086298,0.3055359)(14.101683,-0.44831022996169395)
\psline[linecolor=black,
linewidth=0.04](8.193991,-0.54061794)(14.024759,-0.54061794)
\end{pspicture}
}
\end{center}
\caption{The graph of $CG_8^r=G_n(1,z+z^2)$}\label{Reverse
relabeling of CG8}
\end{figure}}
\end{example}

Let $G_n$ be an io-decomposable Riordan graph of the Bell type.
Since it follows from (iv) of Lemma
\ref{properties-of-io-decomposable} that diam$(G_{n})\le k-1$ if
$n=2^k-1$, the following corollary shows that the upper bound of
Conjecture \ref{conj1} is true if $n=2^k-1$ for $k\ge1$.

\begin{corollary}\label{corollary-catalan-diameter}
For an integer $k\ge1$, we obtain
\begin{align*}
{\rm diam}(CG_{2^k-1})=k-1.
\end{align*}
\end{corollary}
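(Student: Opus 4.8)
The plan is to deduce Corollary \ref{corollary-catalan-diameter} from Theorem \ref{diam-Catalan} together with the fractal structure in Lemma \ref{fractal-Riordan-graph}. Since $\mathrm{diam}(CG_{2^k-1})\le k-1$ already holds by (iv) of Lemma \ref{properties-of-io-decomposable} (with $n=2^k-1$ we have $\lfloor\log_2 n\rfloor=k-1$), it suffices to exhibit a pair of vertices at distance $k-1$ in $CG_{2^k-1}$.

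First I would handle small $k$ directly: for $k=1$ the claim is $\mathrm{diam}(CG_1)=0$, which is trivial, and $k=2$ gives $\mathrm{diam}(CG_3)=1$, which holds since $CG_3\cong K_3$ (as noted in the discussion of $PG_n$, and checkable from the definition). For $k\ge 3$, the idea is to pass to the reverse relabelling $CG_{2^k-1}^r$ and use that it embeds naturally inside $CG_{2^k}^r=G_{2^k}(1,z+z^2)$ from Theorem \ref{diam-Catalan}. Concretely, by Lemma \ref{fractal-Riordan-graph}(ii) the subgraph $\left<\{1,\ldots,2^{k-1}\}\right>$ of $G=\lim G_n$ is isomorphic to $CG_{2^{k-1}}$, and more to the point $CG_{2^k-1}=\left<\{1,\ldots,2^k-1\}\right>$ sits inside $CG_{2^k}=\left<\{1,\ldots,2^k\}\right>$ as the induced subgraph on all but the last vertex. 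The key observation is that in $CG_{2^k}^r=G_{2^k}(1,z+z^2)$, equation \eref{equation} shows the unique shortest $1\to 2^k$ path is $2^0\to 2^1\to\cdots\to 2^{k-1}\to 2^k$, and every intermediate vertex $2^i$ with $i\le k-2$ lies in $\{1,\ldots,2^k-1\}$; only the endpoint $2^k$ is removed when we pass to $CG_{2^k-1}^r$.

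So the main step is: in $CG_{2^k-1}^r$, compute $d(1,2^k-1)$ or, more naturally, exploit the reverse relabelling of $CG_{2^k-1}$ itself. Reversing $[2^k-1]$ sends $1\mapsto 2^k-1$; the vertex $1$ in $CG_{2^k-1}$, by (i) of Lemma \ref{properties-of-io-decomposable} applied within the appropriate fractal block, is far from the block $\{2^{k-1}+1,\ldots,2^k-1\}$. In fact I would argue that $d_{CG_{2^k-1}^r}(1,j)=k-1$ for $j\in\{2^{k-1}+1,\ldots,2^k-1\}$ by the same path argument as in Theorem \ref{Catalan-graph-distance-k}: the path $2^0\to2^1\to\cdots\to2^{k-1}\to j$ has length $k$, but now $j\le 2^k-1<2^k$, and one must check whether a shorter path of length $k-1$ exists. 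Here is the cleaner route: apply Theorem \ref{diam-Catalan} with $k$ replaced by $k-1$ to get a pair at distance $k-1$ inside $CG_{2^{k-1}}$, which by the fractal Lemma \ref{fractal-Riordan-graph}(ii) is an induced subgraph of $CG_{2^k-1}$ (taking $\alpha=0$, i.e. the block $\{1,\ldots,2^{k-1}\}$); distances in an induced subgraph are at least those in the ambient graph only in the wrong direction, so this needs the extra input that the shortest path stays inside the block. For $CG_{2^{k-1}}^r=G_{2^{k-1}}(1,z+z^2)$ the unique shortest $1\to 2^{k-1}$ path $2^0\to\cdots\to2^{k-1}$ does stay inside $\{1,\ldots,2^{k-1}\}$, so $d_{CG_{2^k-1}}(\,\cdot\,,\cdot)\ge k-1$ for the corresponding reversed pair, giving the lower bound.

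The main obstacle will be verifying that no shortcut through the "upper half" vertices $\{2^{k-1}+1,\ldots,2^k-1\}$ shortens the distance below $k-1$ — that is, that deleting vertex $2^k$ from $CG_{2^k}$ does not create a new short path between the extremal pair. I would resolve this by combining: (a) uniqueness of the shortest path in $CG_{2^k}^r$ from \eref{equation}, so the only length-$k$ path through $2^k$ is irrelevant for a pair not involving $2^k$; and (b) the induced-subgraph argument on the block $\{1,\ldots,2^{k-1}\}$, where Theorem \ref{diam-Catalan} pins the distance at exactly $k-1$ and the witnessing shortest path is internal to the block, hence survives in $CG_{2^k-1}$. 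Once the lower bound $\mathrm{diam}(CG_{2^k-1})\ge k-1$ is secured, combining with the upper bound from (iv) of Lemma \ref{properties-of-io-decomposable} finishes the proof.
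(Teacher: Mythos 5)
Your upper bound is fine, but the lower bound argument has a fatal flaw. You propose to take a pair $(u,v)$ at distance $k-1$ inside the block $\left<\{1,\ldots,2^{k-1}\}\right>\cong CG_{2^{k-1}}$ of $CG_{2^k-1}$ and conclude $d_{CG_{2^k-1}}(u,v)\ge k-1$. But for an induced subgraph $H$ of $G$ one only has $d_G(u,v)\le d_H(u,v)$, and the justification you offer --- that the witnessing shortest path of length $k-1$ stays inside the block and hence ``survives'' --- again only yields an \emph{upper} bound on the ambient distance. Worse, the chosen pair genuinely fails: by (i) of Lemma \ref{properties-of-io-decomposable} (applied with $2^{k-1}+1$) the vertex $2^{k-1}+1$ is adjacent to every vertex of $\{1,\ldots,2^{k-1}\}$ in $CG_{2^k-1}$ for $k\ge2$, so \emph{every} pair $u,v\in\{1,\ldots,2^{k-1}\}$ satisfies $d_{CG_{2^k-1}}(u,v)\le 2$ via the path $u\to 2^{k-1}+1\to v$. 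Thus no pair lying entirely in the lower block can realize distance $k-1$ once $k\ge4$. Your point (a) about uniqueness of the shortest $1\to2^k$ path in $CG_{2^k}^r$ concerns a pair involving the deleted vertex $2^k$ and does not transfer to any pair inside $\{1,\ldots,2^k-1\}$.

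The extremal pair must span both halves of the graph. The paper's proof instead computes the reverse relabelling $CG_{2^k-1}^r=G_{2^k-1}(1+z,z+z^2)$ via Lemma \ref{reverse-relabelling-A-sequence-io} and shows, by the same neighbor-maximum argument as in Theorem \ref{diam-Catalan}, that the shortest path from $1$ to $2^k-1$ in $CG_{2^k-1}^r$ is $2^1-1\to2^2-1\to\cdots\to2^k-1$, of length $k-1$; combined with the upper bound this gives $\mathrm{diam}(CG_{2^k-1})=k-1$. To repair your argument you would need to carry out an analogous explicit computation for a pair such as $(1,2^k-1)$, showing no shorter connection exists between the two halves; the fractal embedding of $CG_{2^{k-1}}$ alone cannot do this.
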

\begin{proof}
By Theorem \ref{Catalan-graph-distance-k}, we obtain
diam$(CG_{2^k-1})\le k-1$. It follows from Lemma
\ref{reverse-relabelling-A-sequence-io} that one can show
$CG_{2^k-1}^r=G_n(1+z,z+z^2)$. By using the similar proof in Theorem
\ref{diam-Catalan}, we can show that $2^1-1\rightarrow
2^2-1\rightarrow\cdots\rightarrow2^k-1$ is the shortest path from 1
to $2^k-1$ in $CG_{2^k-1}^r$, i.e. $d_{CG_{2^k-1}^r}(1,2^k-1)=k-1$.
Since it follows from Theorem \ref{Catalan-graph-distance-k} that
diam$(CG_{2^k-1})\le k-1$, we obtain diam$(CG_{2^k-1})=k-1$. Hence
the proof follows.
\end{proof}

The following lemma is useful to obtain Theorem
\ref{upperbound-conj1} and Conjecture \ref{conj3}.

\begin{lemma}\label{diam-Catalan-1em}
Let $n=1+2^m+\sum_{j=0}^s2^{k+j}$ be an integer with $k>m\ge1$. If
$G_n$ be the io-decomposable Riordan graph of the Bell type, then we
obtain
\begin{align*}
{\rm diam}(G_n)\le\left\{
\begin{array}{ll}
s+2
 & \text{if $m=1$;} \\
s+3 & \text{otherwise.}%
\end{array}
\right.
\end{align*}
\end{lemma}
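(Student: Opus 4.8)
The plan is to work in the reverse relabelling $G_n^r$ and exhibit, for every vertex $v$, a short path to a fixed ``hub'' vertex; the diameter bound then follows by concatenating two such paths. Write $n=1+2^m+\sum_{j=0}^s 2^{k+j}=1+2^m+2^k(2^{s+1}-1)$ with $k>m\ge 1$. First I would record, via Lemma~\ref{reverse-relabelling-A-sequence-io}, that $G_n^r=G_n(A'(z)A^{n-2}(z),z/A(z))$, and observe that since $G_n$ is io-decomposable its binary $A$-sequence is $(1,1,a_2,a_2,\ldots)$, so $A(z)\equiv 1+z+(\text{even-degree tail})$ and $z/A(z)\equiv z+z^2+\cdots$. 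The key combinatorial input is the description of adjacencies in $G_n^r$: vertex $i$ is joined to vertex $j>i$ iff $[z^{j-2}](z/A(z))^{i-1}\equiv 1$, exactly as in \eref{equation}. I would then use the fractal structure (Lemma~\ref{fractal-Riordan-graph}): the top block $\langle\{n-2^k+1,\ldots,n\}\rangle$ — or rather the bottom $2^k$ vertices after reversal — is a copy of $G_{2^k}\cong$ an io-decomposable graph on $2^k$ vertices, and by (iv) of Lemma~\ref{properties-of-io-decomposable} it has diameter $\le k$; more importantly, the blocks of size $2^{k+j}$ nest, and the ``universal-type'' vertices $2^k+1$ in each block (from (i) of Lemma~\ref{properties-of-io-decomposable}) act as hubs.

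The main line of argument: split $[n]$ into the initial segment $I=\{1,\ldots,1+2^m\}$, whose induced subgraph is $G_{1+2^m}$ and contains the universal vertex $2^m+1$ (so $I$ has radius $\le 1$ from $2^m+1$, and in fact diameter $\le 2$, or $\le 2$ refined using $m=1$ vs.\ $m\ge 2$), and the remaining ``staircase'' of $s+1$ dyadic blocks of sizes $2^k,2^{k+1},\ldots,2^{k+s}$. Using Lemma~\ref{fractal-Riordan-graph}(i)--(ii) I would show that consecutive blocks are linked through their hub vertices at distance $\le 1$ each, so that from the hub of the first block one reaches the hub of the last block in $\le s$ steps, and from anywhere inside the union of blocks one reaches \emph{some} hub in $\le 1$ step (this uses that a block of size $2^{k+j}$ contains a universal vertex for its predecessor sub-block, exactly the content of Lemma~\ref{properties-of-io-decomposable}(i) applied inside the fractal copy). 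Finally I would connect $I$ to the staircase: the vertex $2^m+1\in I$ (or vertex $1$) is adjacent to a hub of the first block because $[z^{\,\cdot\,}](z/A(z))^{2^m}\equiv$ the relevant coefficient is $1$ by Lucas's theorem applied to $A^{n-2}$-type exponents — this is where the hypothesis $k>m$ is used, ensuring the ``carry'' in the base-$2$ expansion lands correctly. Adding up: from any $u$ to its nearest hub is $\le 1$, hub-to-hub across the staircase is $\le s$, hub into $I$ is $\le 1$, and inside $I$ one more step (the $+1$ when $m=1$) or two more (the $+2$ when $m\ge 2$), giving $\mathrm{diam}(G_n^r)\le s+2$ when $m=1$ and $\le s+3$ otherwise; since $G_n^r\cong G_n$, this is the claim.

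I expect the genuinely delicate step to be the adjacency/Lucas computation that glues the initial block $I$ to the first staircase block $\{2+2^m,\ldots,1+2^m+2^k\}$ — i.e., verifying that some vertex of $I$ is adjacent in $G_n^r$ to the hub $2^m+2^{k-1}+1$ (or an appropriate vertex) of that block. This requires expanding $(z/A(z))^{i-1}\bmod 2$ for $i$ near $2^m+1$ and reading off a single coefficient; because $A(z)$ is only constrained to be $1+z+(\text{even tail})$, one must check the argument is robust to the unknown even-degree coefficients $a_{2j}$, and the clean way is to note $(z/A)^{2^m}\equiv z^{2^m}/A^{2^m}$ and that $A^{2^m}\equiv A(z^{2^m})\cdot(\text{lower order})$ by Freshman's dream, then invoke Lucas. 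A secondary obstacle is bookkeeping the two regimes $m=1$ and $m\ge 2$ uniformly; I would handle $m=1$ first (where $I=\{1,2,3\}$ is a triangle and the saving of one step is transparent) and then treat $m\ge 2$ by the same template with $I$ of diameter $2$. The induction on $s$ is then routine once the base case $s=0$ — which is essentially the ``$n=1+2^m+2^k$'' instance feeding Theorem~\ref{upperbound-conj1} — is established by the gluing argument above.
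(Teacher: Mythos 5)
There is a genuine gap: the step you yourself flag as ``the genuinely delicate step'' --- gluing consecutive blocks by an adjacency/Lucas computation --- is never carried out, and it is not clear it \emph{can} be carried out in the generality of the lemma. The lemma is about an arbitrary io-decomposable $G_n(g,zg)$, so by Lemma~\ref{charac-io-decomp} the only constraint on $A(z)$ is $A\equiv 1+z+(\text{even tail with arbitrary } a_{2j})$; a specific coefficient of $(z/A(z))^{i-1}$ therefore depends on the unknown $a_{2j}$ and cannot be pinned down by Lucas's theorem (that tool works in Theorem~\ref{diam-Catalan} and Theorem~\ref{upperbound-conj1} only because there $A=1/(1-z)$ is explicit). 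Two further problems: (a) your claim that ``from anywhere inside the union of blocks one reaches some hub in $\le 1$ step'' overstates Lemma~\ref{properties-of-io-decomposable}(i), which makes $2^r+1$ universal only in $G_{2^r+1}$ and $G_{2^r+2}$, not in a block of size $2^{k+j}$; and (b) your final count $1+s+1+1$ already gives $s+3$ in the $m=1$ case, where you claim $s+2$.

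The paper's proof shows how to avoid the gluing computation entirely: it chooses the two blocks so that they \emph{overlap} in a single vertex that is universal in the larger one. Concretely, for the inductive step it takes $W_1=\{1,\ldots,2^{k+s}+1\}$ and $W_2=\{2^{k+s}+1,\ldots,n\}$; by Lemma~\ref{fractal-Riordan-graph} these induce $G_{2^{k+s}+1}$ and $G_{n-2^{k+s}}$, the shared vertex $2^{k+s}+1$ is universal in $\langle W_1\rangle$ by Lemma~\ref{properties-of-io-decomposable}(i), and hence $d(i,j)\le 1+\mathrm{diam}(\langle W_2\rangle)$ with the latter controlled by induction on $s$ (the base case $s=0$ is handled the same way with $V_1=\{1,\ldots,2^k+1\}$, $V_2=\{2^k+1,\ldots,n\}$, using the two universal vertices $2^k+1$ and $2^k+2^m+1$). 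Note also that the paper peels off the \emph{largest} dyadic block first, which keeps every block aligned for Lemma~\ref{fractal-Riordan-graph}; your increasing staircase starting from the small block $\{1,\ldots,2^m+1\}$ is not aligned in $G_n$ itself (only after reversal), adding an unaddressed complication since Lemma~\ref{fractal-Riordan-graph} applies to $G_n$, which is of the Bell type, and not to $G_n^r$, which in general is not. To repair your argument you would essentially have to replace the Lucas gluing by the paper's overlapping-universal-vertex device, at which point the reverse relabelling becomes unnecessary.
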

\begin{proof} We prove this by induction on $s\ge0$. Let $s=0$, i.e. $n=1+2^m+2^k$. If $m=1$ then it follows
from (v) of Lemma \ref{properties-of-io-decomposable} that ${\rm
diam}(G_{2^k+3})=2$. For $k>m\ge2$, let $V_1=\{i\in V(G_n)\;|\;1\le
i\le 2^{k}+1\}$ and $V_2=\{j\in V(G_n)\;|\;2^{k}+1\le j\le n\}$.
Since $\left<V_1\right>\cong G_{2^{k}+1}$ and $\left<V_2\right>\cong
G_{2^{m}+1}$ by Lemma \ref{fractal-Riordan-graph}, it follows from
(iv) of Lemma~\ref{properties-of-io-decomposable} that ${\rm
diam}(\left<V_1\right>)={\rm diam}(\left<V_2\right>)=2$. Let $i\in
V_1\backslash\{2^{k}+1\}$ and $j\in V_2\backslash\{2^{k}+1\}$. Now
it is enough to show that $d(i,j)\le 3$. Since the vertices
$2^{k}+1$ and $2^k+2^m+1$ are the universal vertices in
$\left<V_1\right>$ and $\left<V_2\right>$ respectively, we obtain
\begin{align*}
d_{G_n}(i,j)&\le
d_{G_n}(i,2^{k}+1)+d_{G_n}(2^{k}+1,2^k+2^m+1)+d_{G_n}(2^k+2^m+1,j)\\
&\le
d_{\left<V_1\right>}(i,2^{k}+1)+d_{\left<V_2\right>}(2^{k}+1,2^k+2^m+1)+d_{\left<V_2\right>}(2^k+2^m+1,j)\\
&\le3.
\end{align*}
Thus the theorem holds for $s=0$.

Let $s\ge1$, i.e. $n=1+2^m+\sum_{j=0}^s2^{k+j}$. For $k>m\ge1$, let
$W_1=\{i\in V(G_n)\;|\;1\le i\le 2^{k+s}+1\}$ and $W_2=\{j\in
V(G_n)\;|\;2^{k+s}+1\le j\le n\}$. Since by Lemma
\ref{fractal-Riordan-graph} we obtain $\left<W_1\right>\cong
G_{2^{k+s}+1}$ and $\left<W_2\right>\cong G_{n-2^{k+s}}$ , by (iv)
of Lemma~\ref{properties-of-io-decomposable} we obtain ${\rm
diam}(\left<W_1\right>)=2$ and  by induction we obtain ${\rm
diam}(\left<W_2\right>)\le s+1$ if $m=1$ or ${\rm
diam}(\left<W_2\right>)\le s+2$ if $k>m>1$. Let $i\in
W_1\backslash\{2^{k+s}+1\}$ and $j\in W_2\backslash\{2^{k+s}+1\}$.
Now it is enough to show that $d_{G_n}(i,j)\le s+2$ if $m=1$ or
$d_{G_n}(i,j)\le s+3$ if $k>m>1$. Since the vertices $2^{k+1}+1$ are
the universal vertices in $\left<W_1\right>$, we obtain
\begin{align*}
d_{G_n}(i,j)&\le d_{G_n}(i,2^{k+s}+1)+d_{G_n}(2^{k+s}+1,j)\le
1+d_{G_{n-2^s}}(2^{k+s}+1,j).
\end{align*}
Hence, by induction, we obtain the desired result.
\end{proof}

From Lemma \ref{diam-Catalan-1em}, the following theorem shows that
the upper bound of Conjecture \ref{conj1} is true if $n=1+2^m+2^k$
for $k>m\ge1$.

\begin{theorem}\label{upperbound-conj1}
Let $k$ and $m$ be integers with $k>m\ge1$. Then
\begin{align*}
{\rm diam}(CG_{1+2^m+2^k})=\left\{
\begin{array}{ll}
2
 & \text{if $m=1$;} \\
3 & \text{otherwise.}%
\end{array}
\right.
\end{align*}
\end{theorem}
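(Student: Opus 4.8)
The plan is to take the upper bounds straight from Lemma~\ref{diam-Catalan-1em} and to prove matching lower bounds. Since $CG_n=G_n(C,zC)$ is an io-decomposable Riordan graph of the Bell type \cite{CJKM1}, setting $s=0$ and $n=1+2^m+2^k$ in Lemma~\ref{diam-Catalan-1em} already gives $\mathrm{diam}(CG_n)\le2$ if $m=1$ and $\mathrm{diam}(CG_n)\le3$ if $m>1$, so only the reverse inequalities are at issue. The case $m=1$ is immediate: then $n=2^k+3$ with $k\ge2$, and as $CG_n$ is io-decomposable its (at least two) even vertices induce a null subgraph, so $CG_n$ is not complete and $\mathrm{diam}(CG_n)\ge2$; hence $\mathrm{diam}(CG_{2^k+3})=2$.

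For $m\ge2$ I would pass to the reverse relabelling $CG_n^r$. The $A$-sequence generating function of $(C,zC)$ is $1/(1-z)$, so Lemma~\ref{reverse-relabelling-A-sequence-io} gives $CG_n^r=G_n\big((1-z)^{-n},z-z^2\big)$. The first step is to reduce $(1-z)^{-n}$ modulo $2$ up to degree $n-2$: with $n-1=2^m+2^k$, Lucas's theorem shows $[z^\ell](1-z)^{-n}=\binom{n-1+\ell}{n-1}$ is odd exactly when $\ell$ has a $0$ in both bits $m$ and $k$, and for $\ell\le n-2$ this says precisely ``$\ell<2^k$ and bit $m$ of $\ell$ is $0$'', which is exactly the set of exponents of $(1+z)^{2^k-2^m-1}$. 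Hence, as a Riordan graph of order $n$, $CG_n^r=G_n\big((1+z)^{2^k-2^m-1},z-z^2\big)$, and since $(z-z^2)^{j-1}\equiv z^{j-1}(1+z)^{j-1}$ the adjacency relation takes the convenient form
\begin{align*}
r_{i,j}\equiv\binom{2^k-2^m-2+j}{i-j-1}\pmod 2\qquad(i>j).
\end{align*}

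The second step is to exhibit a vertex pair at distance $3$. I would try $u=2$ and $v=2^k+2$ in $CG_n^r$ (equivalently the pair $\{2^m,\,2^m+2^k\}$ in $CG_n$), and prove $d_{CG_n^r}(u,v)\ge3$ by showing $u\not\sim v$ and that $u,v$ have no common neighbour. From the displayed rule, $u$ is adjacent to $1$ and, for $i>2$, to exactly those $i$ for which $i-3$ is a binary submask of $2^k-2^m$ (whose binary support is $\{m,\dots,k-1\}$); so every neighbour of $u$ other than $1$ equals $3+s$ with $s$ supported on $\{m,\dots,k-1\}$, and all neighbours of $u$ lie strictly below $v$. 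Next one checks $u\not\sim v$ (the relevant entry is $\binom{2^k-2^m}{2^k-1}=0$), that $1$ is not a neighbour of $v$ (the entry is $\binom{2^k-2^m-1}{2^k}=0$), and that for $i=3+s$ as above the entry $r_{v,i}=\binom{2^k-2^m+1+s}{2^k-2-s}$ is even. This last point is the heart of the argument and is where $m\ge2$ is used: bit $1$ of $2^k-2-s$ equals $1$ (it lies in $\{1,\dots,m-1\}$ and is untouched by $s$) whereas bit $1$ of $2^k-2^m+1+s$ equals $0$, so $2^k-2-s$ is not a submask of $2^k-2^m+1+s$ and the binomial is even by Lucas's theorem. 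Together with the upper bound this yields $\mathrm{diam}(CG_n)=\mathrm{diam}(CG_n^r)=3$.

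I expect the main obstacle to be the $m\ge2$ lower bound, i.e. isolating a pair of vertices that \emph{provably} has no common neighbour and verifying this uniformly for all $k>m\ge2$; the ``bit $1$'' comparison above is the decisive computation, and one should also confirm in passing that the chosen vertices make sense (for instance $v=2^k+2\le n$, which holds because $2\le2^m+1$). The remaining ingredients---invoking Lemma~\ref{diam-Catalan-1em}, the $m=1$ case, and the binomial reduction of $(1-z)^{-n}$---are routine.
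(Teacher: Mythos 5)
Your proof is correct, and its skeleton matches the paper's: the upper bounds are imported from Lemma~\ref{diam-Catalan-1em}, and the $m\ge2$ lower bound comes from passing to the reverse relabelling and using Lucas's theorem to produce two vertices with no common neighbour. The execution differs in a way worth noting. The paper takes the witness pair $(1,\,2^k+2^m)$ in $CG_n$ and computes the two neighbourhoods by different means --- $N(1)=\{2^s+1 : 0\le s\le k\}$ via the parity of the Catalan numbers from \cite{DS}, and $N(2^k+2^m)$ via row $2$ of the reversed graph --- before checking disjointness. You instead take the pair $(2,\,2^k+2)$ of $CG_n^r$ (i.e. $(2^m,\,2^m+2^k)$ of $CG_n$), reduce $(1-z)^{-n}$ to $(1+z)^{2^k-2^m-1}$ modulo $2$ up to the relevant degree so that every entry of $\mathcal{A}(CG_n^r)$ becomes a single binomial coefficient, and kill all candidate common neighbours with one uniform bit-$1$ comparison; this keeps the whole argument inside the reversed graph and dispenses with the external input from \cite{DS}. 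I checked the decisive step: for $i=3+s$ with $s$ a submask of $2^k-2^m$, bit $1$ of $2^k-2-s$ is $1$ while bit $1$ of $2^k-2^m+1+s$ is $0$ (this is where $m\ge2$ enters), so $\binom{2^k-2^m+1+s}{2^k-2-s}$ is even, and likewise $\binom{2^k-2^m}{2^k-1}=\binom{2^k-2^m-1}{2^k}=0$; all claims hold. Your handling of $m=1$ (non-completeness via the null even part) is in fact slightly more careful than the paper's, which cites Lemma~\ref{diam-Catalan-1em} as though it gave ${\rm diam}(CG_{2^k+3})=2$ when the lemma as stated only yields the upper bound.
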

\begin{proof} Since by Lemma \ref{diam-Catalan-1em} we obtain ${\rm diam}(CG_{2^k+3})=2$,  it is enough to show that
diam$(CG_{1+2^m+2^k})=3$ for $k>m>1$. Now let $k$ and $m$ be
integers with $k>m>1$. By Lemma \ref{e:reverse}, the reverse
relabelling of the Catalan graph $CG_{2^k+2^m+1}$ is
\begin{align}\label{reverse-relabelling-CG-1}
CG_{1+2^m+2^k}^r=G_{1+2^m+2^k}((1-z)^{-1-2^m-2^k},z+z^2).
\end{align}
Let $\mathcal{A}(CG_{1+2^m+2^k})=[c_{i,j}]$ and
$\mathcal{A}(CG_{1+2^m+2^k}^r)=[r_{i,j}]$. By
\eref{reverse-relabelling-CG-1}, we obtain
\begin{align}\label{eq1}
c_{2^m+2^k,j}=r_{2^m+2^k+2-j,2}\equiv\left\{
\begin{array}{ll}
1
 & \text{if $j=2^k+2^m+1$;} \\
 0
 & \text{if $j=2^k+2^m$;} \\
\text{$[z^{2^k+2^m-j}]z(1-z)^{-2^k-2^m}$} & \text{otherwise.}%
\end{array}
\right.
\end{align}
Since
\begin{align*}
[z^{2^k+2^m-j}]z(1-z)^{-2^k-2^m}={2^{k+1}+2^{m+1}-j-2\choose
2^k+2^m-1},
\end{align*}
by Lucas's theorem we obtain for $j=1,\ldots,2^k+2^m-1$
\begin{align}\label{eq2}
c_{2^k+2^m,j}&\equiv{2^{k+1}+2^{m+1}-j-2\choose
2^k+2^m-1}\nonumber\\
&\equiv\left\{
\begin{array}{ll}
1
 & \text{if $j\in\{2^{m+1}+t2^m-1\;|\;t=0,\ldots,2^{k-m}-1\}$;} \\
 0
 & \text{otherwise.}
\end{array}
\right.
\end{align}
By \eref{eq1} and \eref{eq2}, the set $N(2^k+2^m)$ of neighbors of
the vertex $2^k+2^m$ in $CG_{2^k+2^m+1}$ is
\begin{align*}
N(2^k+2^m)=\{2^{m+1}+t2^m-1\;|\;t=0,\ldots,2^{k-m}-1\}\cup\{2^k+2^m+1\}.
\end{align*}
It is known \cite{DS} that $[z^n]C(z)\equiv 1$ if and only if
$n=2^k-1$ for $k\ge1$. It implies
\begin{align*}
c_{i,1}=\left\{\begin{array}{ll} 1
 & \text{if $j\in\{2^s+1\;|\;s=0,1,\ldots,k\}$;} \\
 0
 & \text{otherwise.}
\end{array}
\right.
\end{align*}
Thus the set $N(1)$ of neighbors of the vertex $1$ in
$CG_{2^k+2^m+1}$ is
\begin{align*}
N(1)=\{2^{s}+1\;|\;s=0,\ldots,k\}.
\end{align*}
Since $$2^k+2^m\not\in N(1),\;\;1\not\in N(2^k+2^m)\;\;{\rm
and}\;\;N(1)\cap N(2^k+2^m)=\emptyset,$$ the distance between
vertices $1$ and $2^k+2^m$ in $CG_{1+2^m+2^k}$ is at least 3 so that
by Lemma \ref{diam-Catalan-1em} we obtain diam$(CG_{1+2^m+2^k})=3$.
Hence the proof follows.
\end{proof}

We end this section with the following conjecture.
\begin{conjecture}\label{conj3}
Let $n=1+2^m+\sum_{j=0}^s2^{k+j}$ be an integer with $k>m\ge1$ and
$s\ge1$. Then
\begin{align*}
{\rm diam}(CG_n)=\left\{
\begin{array}{ll}
s+2
 & \text{if $m=1$;} \\
s+3 & \text{otherwise.}%
\end{array}
\right.
\end{align*}
\end{conjecture}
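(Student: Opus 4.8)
Since $CG_n=G_n(C(z),zC(z))$ is an io-decomposable Riordan graph of the Bell type, Lemma~\ref{diam-Catalan-1em} applied with $G_n=CG_n$ already yields $\mathrm{diam}(CG_n)\le s+2$ if $m=1$ and $\le s+3$ if $m>1$, so the entire content of Conjecture~\ref{conj3} is the matching lower bound. As $\mathrm{diam}(CG_n)\ge d_{CG_n}(1,n-1)$, and since Theorem~\ref{upperbound-conj1} (the case $s=0$) suggests that the pair $(1,n-1)$ is extremal, the plan is to prove
\[
d_{CG_n}(1,n-1)=s+2\ (m=1),\qquad d_{CG_n}(1,n-1)=s+3\ (m>1)
\]
by induction on $s$, with base case $s=0$ furnished by Theorem~\ref{upperbound-conj1} (when $m=1$ one also uses that $n-1$ is not of the form $2^t+1$, so $1\not\sim n-1$ and the distance there is $2$). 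Combined with the known upper bound this gives $\mathrm{diam}(CG_n)=d_{CG_n}(1,n-1)$ and hence Conjecture~\ref{conj3}.

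For the inductive step I would use the fractal structure. Put $n'=n-2^{k+s}=1+2^m+\sum_{j=0}^{s-1}2^{k+j}$, so that $2^{k+s}<n<2^{k+s+1}$ and $2^{k+s-1}<n'<2^{k+s}$; with $W_1=\{1,\dots,2^{k+s}+1\}$ and $W_2=\{2^{k+s}+1,\dots,n\}$ one has $\langle W_1\rangle\cong CG_{2^{k+s}+1}$ (diameter $2$, with universal ``bridge'' $B=2^{k+s}+1$ by Lemma~\ref{properties-of-io-decomposable}) and, by Lemma~\ref{fractal-Riordan-graph}, $\langle W_2\rangle\cong CG_{n'}$ via the shift $2^{k+s}+i\mapsto i$, which sends $n-1$ to $n'-1$ and $B$ to $1$. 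Take a shortest path $v_0,\dots,v_\ell$ from $1$ to $n-1$; since $1\notin W_2\ni n-1$, let $t$ be the last index with $v_t\in W_1$, so that $v_{t+1},\dots,v_\ell$ all lie in $\{2^{k+s}+2,\dots,n\}$. If $v_t=B$, then $v_t,\dots,v_\ell$ is a walk in $\langle W_2\rangle$ from $B$ to $n-1$ and $v_0,\dots,v_t$ a walk from $1$ to $B$, whence $\ell\ge d_{CG_n}(1,B)+d_{CG_{n'}}(1,n'-1)\ge 1+(s+1)$ by the inductive hypothesis. Otherwise $v_t\le 2^{k+s}$ and $v_tv_{t+1}$ is a \emph{crossing edge} with $v_{t+1}=2^{k+s}+y$, $y\ge2$; splitting the path and using the reverse triangle inequality in $CG_{n'}$,
\[
\ell\ \ge\ d_{CG_n}(1,v_t)+1+d_{CG_{n'}}(y,n'-1)\ \ge\ d_{CG_n}(1,v_t)-d_{CG_{n'}}(1,y)+(s+2),
\]
so the step closes provided the ``crossing-edge inequality''
\[
(\ast)\qquad d_{CG_n}(1,w)\ \ge\ d_{CG_{n'}}(1,y)\qquad\text{for every crossing edge }w\sim 2^{k+s}+y,\ w\le 2^{k+s},
\]
holds. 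Note that, since the neighbours of $1$ in $CG_n$, namely $\{2^t+1:t\ge0,\ 2^t+1\le n\}$, all lie in $W_1$, any walk from $1$ to a vertex of $W_1$ that enters $\{2^{k+s}+2,\dots,n\}$ uses at least three edges; hence $d_{CG_n}(1,w)=d_{CG_{2^{k+s}+1}}(1,w)\in\{1,2\}$ for the vertices $w$ occurring in $(\ast)$.

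The whole problem thus reduces to proving $(\ast)$, and this is where I expect the real work to be. By \cite{DS} (the statement $[z^m]C(z)\equiv1\iff m=2^t-1$), one has $zC(z)\equiv S(z):=\sum_{t\ge0}z^{2^t}\pmod 2$, so a crossing edge $w\sim 2^{k+s}+y$ amounts to $[z^{\,2^{k+s}+y-1}]S(z)^{w}\equiv1$; and the identity $S^{2^e}\equiv\sum_{g\ge e}z^{2^g}\pmod2$ shows that, writing the binary expansion $w=\sum_{i=1}^r2^{e_i}$ with $e_1<\dots<e_r$, $[z^N]S^{w}$ equals mod $2$ the number of $r$-tuples $(g_1,\dots,g_r)$ with $g_i\ge e_i$ and $\sum_i2^{g_i}=N$ — a quantity computable by Lucas's and Kummer's theorems. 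Two cases of $(\ast)$ are then immediate: $w=1$ has no crossing edge, and if $w-1$ is a power of $2$, say $w=2^g+1$, then a nonzero count forces $N=2^{k+s}+y-1$ to have binary weight $2$, so $y=2^u+1$ with $u<g$, giving $d_{CG_{n'}}(1,y)=1=d_{CG_n}(1,w)$.

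The hard case is $d_{CG_n}(1,w)=2$ (that is, $w\ne1$ and $w-1$ not a power of $2$), where $(\ast)$ requires that \emph{every} $y$ with $w\sim 2^{k+s}+y$ satisfy $d_{CG_{n'}}(1,y)\le 2$. To establish this one must read off from the mod-$2$ count exactly which $y$ can occur: the weight bound $\mathrm{wt}(2^{k+s}+y-1)\le \mathrm{wt}(w)$ (with $\mathrm{wt}$ the number of $1$'s in the binary expansion) is a first restriction, but the finer evaluation is needed — for instance, when $\mathrm{wt}(N)=r=\mathrm{wt}(w)$ the count equals $\prod_{i=1}^{r}\bigl(i-\beta_i\bigr)$, where $\beta_i$ is the number of bits of $N$ below the $i$-th bit of $w$ — and then one must check recursively that each admissible $y$ lies within distance $2$ of $1$ in $CG_{n'}$. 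Carrying out this interlocking of the Lucas-type computation with the recursive structure of $CG_{n'}$ is the main obstacle; once $(\ast)$ is in place, the argument for $m>1$ is the same with every bound raised by one (the base case and the inductive value $d_{CG_{n'}}(1,n'-1)=s+2$ again coming from Theorem~\ref{upperbound-conj1} and the induction), and Conjecture~\ref{conj3} follows.
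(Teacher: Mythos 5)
The statement you set out to prove is left open in the paper: it appears only as Conjecture~\ref{conj3}, with no proof, and the accompanying remark merely notes that the upper bound follows from Lemma~\ref{diam-Catalan-1em} and that equality has been checked by computer for $n\le 2^8$. So there is no proof in the paper to compare yours against; your attempt has to stand on its own, and as written it does not close.

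The parts you carry out are sound. The upper bound is indeed immediate from Lemma~\ref{diam-Catalan-1em}; the decomposition into $W_1$ and $W_2$ with $\left<W_2\right>\cong CG_{n'}$ is the same use of Lemma~\ref{fractal-Riordan-graph} the paper makes elsewhere; the observation that all neighbours of $1$ lie in $W_1$, so $d_{CG_n}(1,w)=d_{\left<W_1\right>}(1,w)\le 2$, is correct; the base case $s=0$ is covered by Theorem~\ref{upperbound-conj1}; and the reformulation of adjacency as $[z^N]S^w\equiv 1$ with $S\equiv zC(z)\equiv\sum_{t\ge0}z^{2^t}$, together with the tuple-counting interpretation of $[z^N]S^w$, is a correct use of the result of \cite{DS}. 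But the entire lower bound hinges on the crossing-edge inequality $(\ast)$, and you establish it only when $w=1$ or $w-1$ is a power of $2$. In the remaining case $d_{CG_n}(1,w)=2$, which covers most crossing edges, $(\ast)$ requires that \emph{every} $y$ with $w\sim 2^{k+s}+y$ satisfy $d_{CG_{n'}}(1,y)\le 2$ --- a strong structural claim about $CG_{n'}$, whose diameter can be as large as $s+2$, that you do not prove and whose truth is not evident; you yourself defer it as ``the main obstacle.'' The inductive step genuinely fails without it: if a single crossing edge had $d_{CG_{n'}}(1,y)=3$, your estimate would only yield $\ell\ge s+1$. So the proposal is a plan for a proof rather than a proof, and the conjectural status of the statement is unchanged. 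Note also that your route would establish the strictly stronger assertion that the one pair $(1,n-1)$ always realizes the diameter; before investing in $(\ast)$, it would be prudent to test that sharper claim against the same computer data the authors used.
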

\begin{remark}
If Conjecture \ref{conj3} is true, then by Lemma
\ref{diam-Catalan-1em} the upper bound of Conjecture \ref{conj1} is
true if $n=1+2^m+\sum_{j=0}^s2^{k+j}$ for $k>m\ge1$ and $s\ge1$. By
using the sage, we have checked that Conjecture \ref{conj3} is true
for $n\le2^8$.
\end{remark}

\section{Conjecture 2}

In this section, we show that Conjecture \ref{conj2} is true for
some special io-decomposable Riordan graphs of the Bell type.

\begin{lemma}\label{lemma-diameter-inequality}
Let $G_n=G_n(g,zg)$ be an io-decomposable Riordan graph. If there
exists $k\ge2$ such that diam$(G_{2^k})=s$ then
diam$(G_{2^{k+m}})\le s+m$ for all $m\ge1$.
\end{lemma}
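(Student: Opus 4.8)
The plan is to establish the single-step estimate $\mathrm{diam}(G_{2^{j+1}})\le \mathrm{diam}(G_{2^{j}})+1$ for every $j\ge 1$ and then telescope. Since the given graph is io-decomposable of the Bell type, so is the whole family $\{G_{N}\}_{N\ge1}$ (io-decomposability depends only on the $A$-sequence, by Lemma~\ref{charac-io-decomp}), and hence Lemmas~\ref{properties-of-io-decomposable} and~\ref{fractal-Riordan-graph} apply to every $G_{N}$; in particular all the diameters below are finite by Lemma~\ref{properties-of-io-decomposable}(iv). Granting the single step, applying it successively for $j=k,k+1,\dots,k+m-1$ gives
\[
\mathrm{diam}(G_{2^{k+m}})\le \mathrm{diam}(G_{2^{k+m-1}})+1\le\cdots\le \mathrm{diam}(G_{2^{k}})+m=s+m ,
\]
which is the claim.

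To prove the single step, fix $j\ge 1$ and split $V(G_{2^{j+1}})$ into its two fractal halves $V_1=\{1,\dots,2^{j}\}$ and $V_2=\{2^{j}+1,\dots,2^{j+1}\}$. First I would invoke the fractal property (Lemma~\ref{fractal-Riordan-graph}(ii)) to obtain $\langle V_1\rangle\cong\langle V_2\rangle\cong G_{2^{j}}$, so that each half is connected with diameter $d:=\mathrm{diam}(G_{2^{j}})$. Next I would observe that the induced subgraph $\langle\{1,\dots,2^{j}+1\}\rangle$ of $G_{2^{j+1}}$ is exactly $G_{2^{j}+1}$, which by Lemma~\ref{properties-of-io-decomposable}(i) has $2^{j}+1$ as a universal vertex; hence in the ambient graph $G_{2^{j+1}}$ the vertex $2^{j}+1\in V_2$ is adjacent to every vertex of $V_1$. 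Finally, for arbitrary vertices $i,\ell$ of $G_{2^{j+1}}$: if they lie in the same half then $d_{G_{2^{j+1}}}(i,\ell)\le d$; and if $i\in V_1$, $\ell\in V_2$ then routing through the bridging vertex gives
\[
d_{G_{2^{j+1}}}(i,\ell)\le d_{G_{2^{j+1}}}(i,2^{j}+1)+d_{\langle V_2\rangle}(2^{j}+1,\ell)\le 1+d .
\]
In both cases $d_{G_{2^{j+1}}}(i,\ell)\le d+1$, so $\mathrm{diam}(G_{2^{j+1}})\le d+1$, as required.

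The arithmetic here is trivial; the only point demanding real care is the middle step, that the universality of $2^{j}+1$ provided by Lemma~\ref{properties-of-io-decomposable}(i) for the stand-alone graph $G_{2^{j}+1}$ still holds inside the larger graph $G_{2^{j+1}}$. This is legitimate because $\langle\{1,\dots,2^{j}+1\}\rangle$ is an induced subgraph of $G_{2^{j+1}}$ that is \emph{equal} to $G_{2^{j}+1}$ (the Riordan graphs $G_1,G_2,\dots$ form a nested family --- equivalently, this is the $\alpha=0$ instance of Lemma~\ref{fractal-Riordan-graph}(i)), so every edge of $G_{2^{j}+1}$ persists in $G_{2^{j+1}}$. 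Beyond this, the argument is pure bookkeeping.
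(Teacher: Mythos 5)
Your proof is correct and follows essentially the same route as the paper: decompose $G_{2^{j+1}}$ into its two fractal halves via Lemma~\ref{fractal-Riordan-graph}, use the universal vertex $2^{j}+1$ from Lemma~\ref{properties-of-io-decomposable}(i) as the bridge to get $\mathrm{diam}(G_{2^{j+1}})\le \mathrm{diam}(G_{2^{j}})+1$, and telescope. The only cosmetic difference is that the paper takes $V_1=\{1,\dots,2^{j}+1\}$ so the bridging vertex lies in both parts, whereas you place it in $V_2$; the argument is the same.
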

\begin{proof}
Let $V_1=\{i\in V(G_n)\;|\;1\le i\le 2^{k+m-1}+1\}$ and $V_2=\{j\in
V(G_n)\;|\;2^{k+m-1}+1\le j\le 2^{k+m}\}$ be the vertex subsets of
$V(G_{2^{k+m}})$. Since $\left<V_1\right>\cong G_{2^{k+m-1}+1}$ has
a universal vertex $2^{k+m-1}+1$ and by Lemma
\ref{fractal-Riordan-graph} we obtain $\left<V_2\right>\cong
G_{2^{k+m-1}}$, we obtain
\begin{align}\label{diameter-inequality}
{\rm diam}(G_{2^{k+m}})\le {\rm diam}(\left<V_2\right>)+1= {\rm
diam}(G_{2^{k+m-1}})+1.
\end{align}
Let diam$(G_{2^k})=s$. Applying for $m=1$ in
\eref{diameter-inequality}, we obtain diam$(G_{2^{k+1}})\le s+1$.
Applying again for $m=2$ in \eref{diameter-inequality}, we obtain
diam$(G_{2^{k+2}})\le s+2$. By repeating this process, we obtain the
desired result.
\end{proof}

Let ${\cal B}(g,f)$ denote a binary Riordan matrix, i.e. ${\cal
B}(g,f)\equiv (g,f)$. We note that a Riordan matrix
$[b_{i,j}]_{i,j\geq 0}$ is of the Bell type given by ${\cal
B}(g,zg)$ with $g(0)=1$
 if and only if, for $i\ge j\ge0$,
\begin{align}\label{recurrence-relation-Bell}
 b_{i+1,0}&\equiv a_1b_{i,0}+a_2b_{i,1}+\cdots+a_{i+1}b_{i,i},\\
b_{i+1,j+1}&\equiv
b_{i,j}+a_1b_{i,j+1}+\cdots+a_{i-j}b_{i,i}\nonumber
\end{align}
where $(1,a_1,\ldots)$ is the binary $A$-sequence of ${\cal
B}(g,zg)$. Let $G_n=G_n(g,zg)$ and $\mathcal{A}(G_n)=[r_{i,j}]_{1\le
i,j\le n}$ where $g(0)=1$. Since $r_{i,j}=b_{i-2,j-1}$ for $i>j\ge
1$, by \eref{recurrence-relation-Bell} we need the finite term
$(1,a_1,\ldots,a_{n-2})$ of the binary $A$-sequence to determine
$\mathcal{A}(G_n)$.

\begin{theorem}
Let $G_{2^k}=G_{2^k}(g,zg)$ be an io-decomposable Riordan graph. If
the binary $A$-sequence of $(g,zg)$ is of the following form
\begin{align}\label{A-sequence-condition}
(\underbrace{1,1,\ldots,1}_{\textrm{$2^m-2$
copies}},0,0,a_{2^m},a_{2^m},a_{2^m+2},a_{2^m+2},\ldots),\;\;a_j\in\{0,1\},\;m\ge4
\end{align}
then for $k\ge4$ we obtain
\begin{align*}
\textrm{diam}(G_{2^k})<\textrm{diam}(CG_{2^k})=k.
\end{align*}
\end{theorem}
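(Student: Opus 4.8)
The plan is to reduce the bound $\mathrm{diam}(G_{2^k})<k$ for all $k\ge 4$ to the single base case $k=m$ via Lemma~\ref{lemma-diameter-inequality}, and then to establish that base case by a direct structural analysis of $G_{2^m}$ using the prescribed $A$-sequence and the fractal decomposition of Lemma~\ref{fractal-Riordan-graph}. First I would observe that the $A$-sequence in \eref{A-sequence-condition} is of the io-decomposable Bell type form required by Lemma~\ref{charac-io-decomp} (it is $(1,1,a_2,a_2,\ldots)$ with the first $2^m-2$ odd-indexed-and-even-indexed entries equal to $1$), so $G_{2^k}$ is indeed io-decomposable and all of Lemma~\ref{properties-of-io-decomposable}, Lemma~\ref{fractal-Riordan-graph}, and Lemma~\ref{lemma-diameter-inequality} apply. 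The strategy: show $\mathrm{diam}(G_{2^m})\le m-1$ (strictly less than $m$); then by Lemma~\ref{lemma-diameter-inequality} with $k\to m$, $s\le m-1$, we get $\mathrm{diam}(G_{2^{m+t}})\le m-1+t<m+t$ for all $t\ge 0$, which is exactly $\mathrm{diam}(G_{2^k})<k$ for all $k\ge m$. For $4\le k<m$ the $A$-sequence truncated to $2^k-2\le 2^m-2$ terms is all $1$'s, i.e. the relevant finite segment $(1,a_1,\ldots,a_{2^k-2})=(1,1,\ldots,1)$ coincides with that of the Pascal graph (whose $A$-sequence generating function is $1/(1-z)$), so by the remark after \eref{recurrence-relation-Bell} that $\mathcal{A}(G_{2^k})$ depends only on this finite segment we get $G_{2^k}\cong PG_{2^k}$, hence $\mathrm{diam}(G_{2^k})=2<k$ for $k\ge 3$; this disposes of all $k<m$. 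So the whole theorem rests on the base case $\mathrm{diam}(G_{2^m})\le m-1$.

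For the base case I would use the fractal decomposition. Write $V_1=\{1,\ldots,2^{m-1}+1\}$ and $V_2=\{2^{m-1}+1,\ldots,2^{m}\}$. By Lemma~\ref{fractal-Riordan-graph}, $\langle V_1\rangle\cong G_{2^{m-1}+1}$ and $\langle V_2\rangle\cong G_{2^{m-1}}$; and since $2^{m-1}+1\le 2^m-2$, the first $2^{m-1}-1$ entries of the $A$-sequence are all $1$, so in fact $\langle V_1\rangle\cong PG_{2^{m-1}+1}$ and $\langle V_2\rangle\cong PG_{2^{m-1}}$ (using $G_{2^{m-1}+1}$ and $G_{2^{m-1}}$ are determined by $A$-segments of length $2^{m-1}-1$ and $2^{m-1}-2$ respectively, all $1$'s). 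In particular $\langle V_1\rangle$ has the universal vertex $2^{m-1}+1$ (by (i) of Lemma~\ref{properties-of-io-decomposable}, or directly from the Pascal identification), and $\mathrm{diam}(\langle V_1\rangle)=\mathrm{diam}(\langle V_2\rangle)=2$. Then the crude bound $\mathrm{diam}(G_{2^m})\le \mathrm{diam}(\langle V_2\rangle)+1=3$ follows exactly as in Lemma~\ref{lemma-diameter-inequality}. Since $m\ge 4$ gives $3\le m-1$, this already yields $\mathrm{diam}(G_{2^m})\le 3<m$ whenever $m\ge 4$, which is all we need.

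Let me reorganize so the dependence on $m$ is clean. I would prove two claims. Claim A: for $4\le k\le m$, $\mathrm{diam}(G_{2^k})\le 3$. Proof of Claim A: split $V(G_{2^k})$ into $V_1=\{1,\ldots,2^{k-1}+1\}$ and $V_2=\{2^{k-1}+1,\ldots,2^k\}$; since $k\le m$ forces $2^{k-1}+1\le 2^m-2$ (indeed $2^{k-1}+1\le 2^{m-1}+1\le 2^m-2$ for $m\ge 3$), both induced subgraphs are determined by the all-$1$'s segment of the $A$-sequence and hence are isomorphic to Pascal graphs of orders $2^{k-1}+1$ and $2^{k-1}$, each of diameter $\le 2$; moreover $\langle V_1\rangle$ has the universal vertex $2^{k-1}+1$, so any vertex of $V_1$ reaches it in $\le 2$ steps and any vertex of $V_2$ in $\le 1$ step (the vertex $2^{k-1}+1\in V_2$ is universal in $\langle V_2\rangle$), giving $d\le 3$ between any two vertices, i.e. $\mathrm{diam}(G_{2^k})\le 3$. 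Claim B: for $k>m$, $\mathrm{diam}(G_{2^k})\le k-m+3$. Proof of Claim B: apply Lemma~\ref{lemma-diameter-inequality} with the base value $\mathrm{diam}(G_{2^m})\le 3$ from Claim A, giving $\mathrm{diam}(G_{2^{m+t}})\le 3+t$ for $t\ge 1$; set $t=k-m$. Finally combine: for $4\le k\le m$, $\mathrm{diam}(G_{2^k})\le 3<k$ since $k\ge 4$; for $k>m\ge 4$, $\mathrm{diam}(G_{2^k})\le k-m+3\le k-1<k$ since $m\ge 4$. In all cases $\mathrm{diam}(G_{2^k})<k=\mathrm{diam}(CG_{2^k})$, the last equality by Theorem~\ref{diam-Catalan}.

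The main obstacle is not really the diameter arithmetic — that is routine once the decomposition is in place — but rather justifying cleanly that the induced subgraphs on the ``first half'' are genuinely Pascal graphs: one must check that the finite $A$-segment controlling $G_{2^{k-1}+1}$ (namely $(1,a_1,\ldots,a_{2^{k-1}-1})$) lies entirely within the initial block of $2^m-2$ ones, which needs the inequality $2^{k-1}-1\le 2^m-2$, i.e. $k\le m+1$ roughly; for $k=m$ this is $2^{m-1}-1\le 2^m-2$, true, but one has to be careful that the two ``extra'' positions $2^m-1,2^m$ where the $A$-sequence is $0$ rather than $1$ do not intrude — they do not, because $2^{k-1}-1<2^m-2$ strictly when $k\le m$ and $m\ge 3$. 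I would state this index bookkeeping explicitly as the one nontrivial verification, and then the rest follows by invoking Lemma~\ref{lemma-diameter-inequality} and Theorem~\ref{diam-Catalan}. An alternative to the Pascal identification, if the index check proves delicate, is to bypass it and argue directly from (i) of Lemma~\ref{properties-of-io-decomposable} that $\langle V_1\rangle\cong G_{2^{k-1}+1}$ has universal vertex $2^{k-1}+1$ together with (iv) of that lemma giving $\mathrm{diam}(\langle V_2\rangle)=\mathrm{diam}(G_{2^{k-1}})\le k-1$ — but that only yields $\mathrm{diam}(G_{2^k})\le k$, not the strict inequality, so the Pascal structure (or some other input forcing diameter exactly $2$ on the halves, valid because $m\ge 4$ keeps the halves in the ``all-ones'' regime for $k\le m$) really is needed for strictness.
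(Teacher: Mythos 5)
Your argument contains a fatal misidentification: the Riordan graph of the Bell type whose binary $A$-sequence is all ones is the \emph{Catalan} graph, not the Pascal graph. The Pascal graph $PG_n=G_n(1/(1-z),z/(1-z))$ has $A$-sequence generating function $z/\bar f=1+z$, i.e.\ $(1,1,0,0,\ldots)$, whereas the all-ones sequence has generating function $1/(1-z)$, which the paper computes (in the proof of Theorem~\ref{diam-Catalan}) to be precisely the $A$-sequence of $(C,zC)$. Consequently your Claim~A collapses: for $4\le k\le m$ the two halves $\left<V_1\right>$ and $\left<V_2\right>$ are $CG_{2^{k-1}+1}$ and $CG_{2^{k-1}}$, and the latter has diameter $k-1$ by Theorem~\ref{diam-Catalan}, not $2$; so you only recover $\mathrm{diam}(G_{2^k})\le k$, which, as you yourself observe, is not enough for strictness. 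Your base case $\mathrm{diam}(G_{2^m})\le 3$ is in fact false for $m\ge 5$ (the paper shows this diameter equals $m-1$), and for $4\le k<m$ the graph $G_{2^k}$ \emph{is} $CG_{2^k}$ (only the first $2^k-1$ entries of the $A$-sequence matter, and they are all $1$), so it has diameter exactly $k$ rather than $2$; those cases simply are not covered by the intended reading of the theorem, and your ``disposal'' of them rests on the same false Pascal identification.

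A decisive sanity check: nowhere do you use the two zero entries $a_{2^m-2}=a_{2^m-1}=0$, which are the entire content of hypothesis \eref{A-sequence-condition}. An argument that ignores them would apply verbatim to the all-ones $A$-sequence, i.e.\ to $CG_{2^k}$ itself, and would ``prove'' $\mathrm{diam}(CG_{2^k})<k$, contradicting Theorem~\ref{diam-Catalan}. The paper's proof uses exactly these zeros: via the recurrence \eref{recurrence-relation-Bell} they force the last vertex $2^m$ of $G_{2^m}$ to be adjacent only to $1$ and $2^m-1$, while the induced subgraph on $\{1,\ldots,2^m-1\}$ is $CG_{2^m-1}$, whose diameter is $m-1$ by Corollary~\ref{corollary-catalan-diameter}; a short case analysis then gives $\mathrm{diam}(G_{2^m})=m-1<m$, and only after that does Lemma~\ref{lemma-diameter-inequality} propagate the strict inequality to all $k\ge m$. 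Your overall architecture (a base case at order $2^m$ followed by Lemma~\ref{lemma-diameter-inequality}) does match the paper's, but the base case is where all the real work lies, and your version of it is wrong.
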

\begin{proof}
First we show that diam$(G_{2^m})=m-1$. Since the induced subgraph
$H$ of $\{1,2\ldots,2^m-1\}$ in $G_{2^m}$ is $H=CG_{2^m-1}$ and
$CG_{2^m-1}^r=G_{2^m-1}(1+z,z+z^2)$, the $(2^m-1)$th row of
$\mathcal{A}(G_{2^m})=[r_{i,j}]$ is given by
\begin{align}\label{2m-1th-row}
(0,\ldots,0,1,1,0,1)=(r_{2^m-1,i})_{i=1}^{2^m}.
\end{align}
By \eref{recurrence-relation-Bell}, \eref{A-sequence-condition} and
\eref{2m-1th-row}, the $2^m$th row in
$\mathcal{A}(G_{2^m})=[r_{i,j}]$ is given by
\begin{align*}
(1,0,\ldots,0,1,0)=(r_{2^m,i})_{i=1}^{2^m}
\end{align*}
which means the only two vertices 1 and $2^m-1$ are adjacent to the
vertex $2^m$ in $G_{2^m}$. Let $V_1=\{1,\ldots,2^{m-1}+1\}$ and
$V_2=\{2^{m-1}+1,\ldots,2^{m}-1\}$ be the vertex subsets of
$V(G_{2^m})$. Since $\left<V_1\right>$ has the universal vertex
$2^{m-1}+1$ and $\left<V_2\right>\cong CG_{2^{m-1}-1}$, if $v_1\in
V_1$ and $v_2\in V_2$ then we respectively obtain
$d_{G_{2^m}}(v_1,2^m)\le 3$ and
\begin{align*}
d_{G_{2^m}}(v_2,2^m)\le {\rm diam}(CG_{2^{m-1}-1})+1\le
2^m-1\quad(\textrm{by Corollary \ref{corollary-catalan-diameter}})
\end{align*}
which implies diam$(G_{2^m})=m-1$. Hence, by Lemma
\ref{lemma-diameter-inequality}, we obtain the desired result.
\end{proof}

\begin{center}
\begin{tabular}{c|c||c|c}
  $A$-seq. of $G_{8}$ & diam$(G_{8})$ & $A$-seq. of $G_{8}$ & diam$(G_{8})$\\
  \hline\hline
  $(1,1,0,0,0,0,0)$ & 2 & $(1,1,1,1,0,0,1)$ & 2 \\
  \hline
  $(1,1,1,1,0,0,0)$ & 2 & $(1,1,0,0,1,1,1)$ & 2 \\
  \hline
  $(1,1,0,0,1,1,0)$ & 2 & $(1,1,1,1,1,1,0)$ & 3 \\
  \hline
  $(1,1,0,0,0,0,1)$ & 2 & $(1,1,1,1,1,1,1)$ & 3 \\
  \hline
  $(1,1,1,1,1,1,0)$ & 2 &  &
\end{tabular}
\centerline{{\rm Table 1} \quad Diameters of io-decomposable Riordan
graphs of the Bell type with degree 8}
\end{center}

\begin{center}
{\small\begin{tabular}{c|c||c|c}
  $A$-seq. of $G_{16}$ & diam$(G_{16})$ & $A$-seq. of $G_{16}$ & diam$(G_{16})$\\
  \hline\hline
  $(1,1,1,1,1,1,0,0,0,0,0,0,0,0,0)$ & 3 & $(1,1,1,1,1,1,1,1,0,0,0,0,0,0,0)$  & 3 \\
  \hline
  $(1,1,1,1,1,1,0,0,1,1,0,0,0,0,0)$ & 3 & $(1,1,1,1,1,1,1,1,1,1,0,0,0,0,0)$  & 3 \\
  \hline
  $(1,1,1,1,1,1,0,0,0,0,1,1,0,0,0)$ & 3 & $(1,1,1,1,1,1,1,1,0,0,1,1,0,0,0)$  & 3 \\
  \hline
  $(1,1,1,1,1,1,0,0,0,0,0,0,1,1,0)$ & 3 & $(1,1,1,1,1,1,1,1,0,0,0,0,1,1,0)$  & 3 \\
  \hline
  $(1,1,1,1,1,1,0,0,0,0,0,0,0,0,1)$ & 3 & $(1,1,1,1,1,1,1,1,0,0,0,0,0,0,1)$  & 3 \\
  \hline
  $(1,1,1,1,1,1,0,0,1,1,1,1,0,0,0)$ & 3 & $(1,1,1,1,1,1,1,1,1,1,1,1,0,0,0)$  & 3 \\
  \hline
  $(1,1,1,1,1,1,0,0,1,1,0,0,1,1,0)$ & 3 & $(1,1,1,1,1,1,1,1,1,1,0,0,1,1,0)$  & 3 \\
  \hline
  $(1,1,1,1,1,1,0,0,1,1,0,0,0,0,1)$ & 3 & $(1,1,1,1,1,1,1,1,1,1,0,0,0,0,1)$  & 3 \\
  \hline
  $(1,1,1,1,1,1,0,0,0,0,1,1,1,1,0)$ & 3 & $(1,1,1,1,1,1,1,1,0,0,1,1,1,1,0)$  & 3 \\
  \hline
  $(1,1,1,1,1,1,0,0,0,0,1,1,0,0,1)$ & 3 & $(1,1,1,1,1,1,1,1,0,0,1,1,0,0,1)$  & 3 \\
  \hline
  $(1,1,1,1,1,1,0,0,1,1,1,1,1,1,0)$ & 3 & $(1,1,1,1,1,1,1,1,0,0,0,0,1,1,1)$  & 3 \\
  \hline
  $(1,1,1,1,1,1,0,0,1,1,0,0,1,1,1)$ & 3 & $(1,1,1,1,1,1,1,1,1,1,1,1,1,1,0)$  & 3 \\
  \hline
  $(1,1,1,1,1,1,0,0,1,1,1,1,0,0,1)$ & 3 & $(1,1,1,1,1,1,1,1,1,1,1,1,0,0,1)$  & 3 \\
  \hline
  $(1,1,1,1,1,1,0,0,1,1,1,1,1,1,0)$ & 3 & $(1,1,1,1,1,1,1,1,1,1,0,0,1,1,1)$  & 3 \\
  \hline
  $(1,1,1,1,1,1,0,0,1,1,1,1,1,1,1)$ & 3 & $(1,1,1,1,1,1,1,1,0,0,1,1,1,1,1)$  & 3 \\
  \hline
                                      &   & $(1,1,1,1,1,1,1,1,1,1,1,1,1,1,1)$  & 4
\end{tabular}}
{{\rm Table 2} \quad Diameters of io-decomposable Riordan graphs of
the Bell type with degree 16 such that the first 6 entries of its
$A$-sequence are all 1s}
\end{center}

By Lemma \ref{lemma-diameter-inequality}, using the results in Table
1 and 2 we obtain the following theorem.

\begin{theorem}
For $k\ge4$, let $G_{2^k}=G_{2^k}(g,zg)$ be an io-decomposable
Riordan graph and $G_{2^k}\not\cong CG_{2^k}$. If the first 16
entries in the binary $A$-sequence of $(g,zg)$ are not all 1s then
\begin{align*}
\textrm{diam}(G_{2^k})<\textrm{diam}(CG_{2^k})=k.
\end{align*}
\end{theorem}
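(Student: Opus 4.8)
The plan is to reduce the statement to the two explicit finite computations recorded in Tables~1 and~2 together with the monotonicity result of Lemma~\ref{lemma-diameter-inequality}. First I would recall that, by the discussion following \eqref{recurrence-relation-Bell}, the adjacency matrix $\mathcal{A}(G_{2^k})$ depends only on the finite prefix $(1,a_1,\ldots,a_{2^k-2})$ of the binary $A$-sequence; and by Lemma~\ref{charac-io-decomp} an io-decomposable Riordan graph of the Bell type has $A$-sequence of the form $(1,1,a_2,a_2,a_4,a_4,\ldots)$, so the prefix is determined by $(a_2,a_4,\ldots)$. In particular, for $k=3$ the graph $G_8$ is determined by $(a_2,a_4,a_6)$, giving $2^3=8$ possibilities, which are exactly the rows of Table~1 (note there are only $8$ distinct $A$-sequences, and the row $(1,1,1,1,1,1,0)$ appears twice in the printed table, the second occurrence being a typographical repetition); and for $k=4$ the graph $G_{16}$ is determined by $(a_2,a_4,a_6,a_8,a_{10},a_{12},a_{14})$, but under the standing hypothesis that the first $16$ entries are not all $1$'s we have two cases to handle.

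Second, I would split on whether $a_2=0$ or $a_2=1$ (equivalently, whether the third entry of the $A$-sequence is $0$ or $1$). If $a_2=1$ but the $A$-sequence prefix of length $16$ is not all $1$'s, then some later entry among $a_4,\ldots,a_{14}$ vanishes, and the relevant data for $G_{16}$ is precisely an entry of Table~2; every such entry has $\mathrm{diam}(G_{16})=3<4$, so by Lemma~\ref{lemma-diameter-inequality} applied with $k=4$ and $s=3$ we get $\mathrm{diam}(G_{2^{k}})\le 3+(k-4)=k-1<k$ for all $k\ge4$. If instead $a_2=0$, then the third and fourth entries of the $A$-sequence are $0$, so the prefix of length $8$ is $(1,1,0,0,a_4,a_4,a_6)$, which is one of the rows $(1,1,0,0,\ast,\ast,\ast)$ of Table~1; all of these have $\mathrm{diam}(G_8)=2<3$, so Lemma~\ref{lemma-diameter-inequality} with $k=3$, $s=2$ gives $\mathrm{diam}(G_{2^{k}})\le 2+(k-3)=k-1<k$ for all $k\ge3$, in particular for $k\ge4$. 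Combining the two cases covers every io-decomposable $G_{2^k}$ whose $A$-sequence does not begin with sixteen $1$'s, except we must still rule out $G_{2^k}\cong CG_{2^k}$, but that is excluded by hypothesis (the Catalan graph has $A$-sequence $(1,1,1,\ldots)$ anyway, so the ``not all $1$s'' condition already forces $G_{2^k}\not\cong CG_{2^k}$; the explicit exclusion is merely for emphasis). Finally $\mathrm{diam}(CG_{2^k})=k$ is Theorem~\ref{diam-Catalan}.

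The one place that needs a little care, rather than being purely bookkeeping, is making the case division on $a_2$ exhaustive and correctly matched to the tables: when $a_2=1$ one must observe that Table~2 lists \emph{all} length-$15$ $A$-sequences starting with six $1$'s that are not the all-$1$s sequence --- i.e. with the third through sixth entries forced to $1$ (since $a_2=a_4=1$) and at least one of $a_8,a_{10},a_{12},a_{14}$ equal to $0$ --- and to check that the table is indeed complete for that sub-case. I expect this matching to be the main (minor) obstacle: one has to be sure no admissible $A$-sequence prefix falls outside both Table~1 (the $a_2=0$ case, reading off a length-$8$ prefix) and Table~2 (the $a_2=1$, not-all-$1$s case, reading off a length-$16$ prefix), and that the entry used from each table really does have the subgraph-isomorphism type forced by Lemma~\ref{fractal-Riordan-graph}, so that Lemma~\ref{lemma-diameter-inequality} applies verbatim. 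Once that verification is in place, the conclusion $\mathrm{diam}(G_{2^k})\le k-1<k=\mathrm{diam}(CG_{2^k})$ follows immediately.
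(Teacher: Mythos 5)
Your strategy is the paper's own: the paper's entire ``proof'' is the one-line remark that the theorem follows from Lemma \ref{lemma-diameter-inequality} together with Tables 1 and 2, and you are supplying the case analysis it leaves implicit. However, your case division is misaligned with the tables in a way that leaves a real hole. You split on $a_2=0$ versus $a_2=1$, and in the latter case you assert that the length-$15$ prefix of the $A$-sequence ``is precisely an entry of Table~2.'' That fails when $a_2=1$ but $a_4=0$: Table~2 lists only sequences whose first \emph{six} entries are all $1$s (i.e.\ $a_2=a_4=1$), so the admissible prefix $(1,1,1,1,0,0,a_6,a_6,\ldots)$ --- which certainly satisfies the theorem's hypothesis --- falls outside Table~2 and outside your $a_2=0$ case alike. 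The correct split is: ``first six entries not all $1$s'' (equivalently $a_2=0$ or $a_4=0$), where the length-$7$ prefix is one of the six Table~1 rows of the form $(1,1,0,0,\ast,\ast,\ast)$ or $(1,1,1,1,0,0,\ast)$, all of diameter $2$, so Lemma \ref{lemma-diameter-inequality} with base $k=3$, $s=2$ gives ${\rm diam}(G_{2^k})\le k-1$ for $k\ge4$; versus ``first six all $1$s but not all sixteen,'' which is the genuine Table~2 case with base $k=4$, $s=3$. You correctly flagged that one must check no admissible prefix falls outside both tables, but then resolved that check incorrectly.

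Two further cautions about taking the tables at face value. Table~1 prints the row $(1,1,1,1,1,1,0)$ twice with different diameters ($2$ and $3$); this does not affect the repaired argument, since that row has $a_2=a_4=1$ and is never invoked from Table~1. More seriously, the left column of Table~2 is \emph{not} the complete list of the sixteen prefixes with $a_6=0$: it contains a duplicated row and omits $(1,1,1,1,1,1,0,0,0,0,0,0,1,1,1)$ and $(1,1,1,1,1,1,0,0,0,0,1,1,1,1,1)$. So the completeness verification you identified as the crux of the argument actually fails for the tables as printed, and the two missing diameters must be computed before the proof can be considered complete. With the case split corrected and those two entries supplied, the rest of your argument (reading off $s$ from the table and propagating via Lemma \ref{lemma-diameter-inequality}, with ${\rm diam}(CG_{2^k})=k$ from Theorem \ref{diam-Catalan}) is sound and coincides with the paper's intent.
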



\bigskip

\end{document}